\documentclass[twoside, 12pt]{article}
\usepackage{mathrsfs}
\usepackage{amssymb, amsmath, mathrsfs, amsthm}
\usepackage{graphicx}
\usepackage{subcaption}
\usepackage{color}
\usepackage{tikz}
\usepackage[top=2cm, bottom=2cm, left=1.8cm, right=1.8cm]{geometry}
\usepackage{float, caption, subcaption}
\usepackage{diagbox}
\usepackage{algorithm}
\usepackage{algpseudocode}
\usepackage{amsmath}
\usepackage[colorlinks,
  linkcolor=blue,
 anchorcolor=blue,
 citecolor=blue]{hyperref} 
\input{epsf}

\newcommand{\bd}{\begin{description}}
\newcommand{\ed}{\end{description}}
\newcommand{\bi}{\begin{itemize}}
\newcommand{\ei}{\end{itemize}}
\newcommand{\be}{\begin{enumerate}}
\newcommand{\ee}{\end{enumerate}}
\newcommand{\beq}{\begin{equation}}
\newcommand{\eeq}{\end{equation}}
\newcommand{\beqs}{\begin{eqnarray*}}
\newcommand{\eeqs}{\end{eqnarray*}}

\definecolor{DarkGreen}{rgb}{0.2, 0.6, 0.3}

\usepackage{multicol}
\usepackage{amsthm}
\usepackage{multirow}
\usepackage{makecell}

\newtheorem{theorem}{Theorem}[section]
\newtheorem{conjecture}{Conjecture}

\newtheorem{lemma}{Lemma}[section]
\newtheorem{definition}{Definition}[section]

\newtheorem{case}{Case}

\newtheorem{claim}{Claim}

\begin{document}
\title{\textbf{Induced Ramsey numbers for fans} \footnote{Supported by the National Science Foundation of China
(Nos. 12471329 and 12061059) and Qinglan Project of Jiangsu Province of China.}
}

\author{Chuang Zhong\footnote{School of Mathematics and Statistis, Qinghai Normal University, Xining, Qinghai 810008, China. {\tt zhongchuang1998@163.com}}, \ \ Masaki Kashima\footnote{Faculty of Science and Technology, Keio University, Yokohama, Kanagawa, Japan. {\tt  masaki.kashima10@gmail.com}}, \ \
Yaping Mao\footnote{Academy of Plateau
Science and Sustainability, and School of Mathematics and Statistis, Qinghai Normal University, Xining, Qinghai 810008,
China. {\tt yapingmao@outlook.com; myp@qhnu.edu.cn}}, \ \ Yan Zhao \footnote{Corresponding author: Department of Mathematics, Taizhou University, Taizhou 225300, China. {\tt zhaoyan326@tzu.edu.cn}}
}
\date{}
\maketitle

\begin{abstract}
The induced Ramsey number $r_{\mathrm{ind}}(G,H)$ is defined as the minimum order of a graph $F$ on such that any 2-coloring of its edges with red and blue leads to either a red induced copy of $G$ or a blue induced copy of $H$. Motivated by the Kohayakawa-Prömel-Rödl conjecture, we prove that a quadratic upper bound $\mathrm{r}_{\text {ind}}\left(G, F_n\right) \leq C n^2$ for fixed $G$, where $F_n$ is a graph with one central vertex, $2n$ leaf vertices, and $n$ disjoint edges. In particular, for star graphs $K_{1, \ell}$ $(\ell \leq n)$, constructive coloring and matching arguments yield $2 n+2 \ell-1 \leq \mathrm{r}_{\text {ind}}\left(K_{1, \ell}, F_n\right) \leq(\ell+n-1)(\ell+1)+1$, with the exact value $\mathrm{r}_{\text {ind}}\left(K_{1,2}, F_n\right)=3 n+4$. \\[2mm]
{\bf Keywords:} Ramsey theory; Induced Ramsey number; Fan \\[2mm]
{\bf AMS subject classification 2020:} 05D10; 05D40; 05C80.

\end{abstract}

\section{Introduction}

All graphs in this paper are undirected, finite and simple, and any undefined concepts or notation can be found in \cite{BM}. Let $G=(V(G),E(G))$ be a graph with vertex set $V(G)$ and edge set $E(G)$.  

We write $F\xrightarrow{ind}(G,H)$ for graphs $F$, $G$, and $H$, if for any coloring of the edges of $F$ in
red and blue, there is either a red induced copy of $H$ or a blue induced copy of $G$. A red induced copy of a graph $H$ (resp. blue induced copy of $H$) is an induced subgraph $F'$ of $F$ such that $F' \cong H$ and all edges of $F'$ are red (resp. blue).
\begin{definition}
For
graphs $G$ and $H$, the induced Ramsey number for $H$ and $G$, denoted by $r_{\mathrm{ind}}(G,H)$, is the
smallest number of vertices in a graph $F$ such that $F\xrightarrow{ind}(G,H)$.   
\end{definition}

\begin{definition}
For graphs $G$ and $H$, the \textit{Ramsey number} $r(G,H)$ is the minimum integer $n$ such that any red/blue coloring of the edges of the complete graph  $K_{n}$ contains a red copy of $G$ or a blue copy of $H$ as a subgraph.
\end{definition}

The existence of such a graph $F$ for any graphs $H$ and $G$ was proven by Deuber \cite{Deuber75}, Erd\H{o}s, Hajnal, and Pósa \cite{EHP75}, and R\"{o}dl \cite{R73,R76}. This led to extensive research on induced Ramsey numbers. Since in the case of complete graphs the induced subgraph is the same as the subgraph, it is obvious that $r_{\mathrm{ind}}(K_{m},K_{n}) = r(K_{m},K_{n})$. Erd\H{o}s \cite{E75} conjectured that there is a positive constant $c$ such that each graph $G$ with $n$ vertices satisfies $r_{\mathrm{ind}}(G,G) \leq 2^{cn}$. 
R\"{o}dl \cite{R73} proved that $r_{\mathrm{ind}}(G,G)$ is indeed exponential in $n=|V (G)|$ if $G$ is bipartite. 
A recent paper by Kohayakawa, Pr\"{o}mel, and R\"{o}dl \cite{KPR98} established $r_{\mathrm{ind}}(G,G) \leq n^{cn\log \chi(G)}$, where $\chi(G)$ is the chromatic number of $G$.
In particular, their result implies an upper bound of $2^{cn(\log n)^2}$ that give a positive answer to the conjecture of Erd\H{o}s. Further, they proved that if $G$ belongs to certain restricted classes of graphs, the upper bound becomes polynomial in $n$.
In their proof, the graphs which give the bounds are randomly constructed using projective planes. Subsequently, Conlon, Fox, and Sudakov \cite{CFS12} showed that $r_{\mathrm{ind}}(G,G) \leq 2^{cn \log n}$, improving the previous bound. 
Haxell, Kohayakawa,
and Łuczak \cite{HKL95} establishd that the cycle of length $k$ has induced Ramsey number linear in $k$. Subsequently, Fox and Sudakov \cite{FS08} introduced a unified approach to proving Ramsey‐type theorems for graphs
with a forbidden induced subgraph. This method not only provides explicit constructions for upper bounds on various induced Ramsey numbers but also yields further linearity results. They demonstrated that, although the induced Ramsey number is linear in $k$ for a path or a star on $k$ vertices, it becomes superlinear for any tree that contains both a $k$-vertex path and a $k$-vertex star.

Further results are obtained when $G$ and $H$ are fixed.
For example, 
Gorgol \cite{Gorgol01} gave the exact value of $r_{\mathrm{ind}}(K_{1,m}, K_n)$. 
Gorgol and {\L}uczak \cite{GL02}
showed $r_{\mathrm{ind}}(kK_2,K_n)=kn$.
Schaefer and Shah \cite{SS03} provided upper bounds of $r_{\mathrm{ind}}(C_n,C_n)$, $r_{\mathrm{ind}}(T,K_n)$ and $r_{\mathrm{ind}}(B,C_n)$ by constructive methods.
Kostochka and Sheikh \cite{KS06}
gave the upper bound of $r_{\mathrm{ind}}(P_3,H)$, and proved that
$r_{\mathrm{ind}}(P_3, K_{n_1} + K_{n_2} + \ldots + K_{n_m}) = \sum_{i=1}^m \frac{n_i(n_i+1)}{2}$ for any positive integers $n_1 \leq \ldots \leq n_m$.
For more results, see \cite{AG20star,AG20,Gorgol19}.

In 1998, Kohayakawa, Pr\"omel and R\"odl proposed the following conjecture. 
\begin{conjecture}{\upshape \cite{KPR98}}\label{con-1}
For any graph $G$, there is a constant $f=f(G)$ that depends only on $G$ such that, for any graph $H$ on $n$ vertices, we have
$$
r_{\mathrm{ind}}(G, H) \leq n^f.
$$    
\end{conjecture}

Towards this conjecture, they obtained a general upper bound that applies to all graphs, albeit with an exponent heavily dependent on the chromatic number of the target graph.

\begin{theorem}{\upshape \cite{KPR98}}\label{KPR1}
Let $G$ and $H$ be graphs with $|V(G)|=k$ and $|V(H)|=n$, where $k \leq n$. If $q=\chi(H) \geq 2$, then
$$
r_{\mathrm{ind}}(G, H) \leq n^{C k \log q}
$$  
for some absolute constant $C$.
\end{theorem}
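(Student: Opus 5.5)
The natural route is the Kohayakawa--Pr\"omel--R\"odl random construction built on projective planes. This is the construction the introduction credits for the polynomial bounds in \cite{KPR98}. The plan is to build a single host graph $F$ on $n^{Ck\log q}$ vertices and show that $F\xrightarrow{ind}(G,H)$.

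\textbf{Step 1: reduce to a complete multipartite target.} Since $\chi(H)=q$, the graph $H$ is an induced subgraph of a suitable blow-up of $K_q$. It is therefore enough to embed $H$ inside a structured host in which every $q$-partite graph with parts of size $n$ sits as an induced subgraph.

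\textbf{Step 2: build the host.} Take a projective plane of order $p\approx n^{c}$ (or a product of such planes). Its points give the vertex set and its lines give the cliques. Replace each line by a random copy of a suitable $q$-partite-structured graph, or equivalently take a random induced subgraph of the point--line incidence structure, with edge probability about $n^{-c'}$.

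The point of the projective plane is that two lines meet in exactly one point. Consequently the random pieces placed on distinct lines share at most one vertex. This keeps short cycles across lines rare, so every copy of $G$ (which has $k$ vertices) is forced to live essentially inside one line.

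\textbf{Step 3: the colouring argument.} Fix a red/blue colouring of $F$ and suppose there is no red induced $G$. Apply the induced Ramsey property to each line-piece. Each piece is chosen to arrow $(G, K_{q}$-type structure$)$ in the non-induced sense, and the sparsity from Step 2 upgrades containment to induced containment. Then iterate, as in the $\log q$ recursion: split the $q$ colour classes in half, find a monochromatic blue structure on each half, and merge. This recursion has $\log q$ levels, and each level costs a factor of $n^{O(k)}$. That accounts for the exponent $Ck\log q$.

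\textbf{Step 4: the union bound.} Make the random choices work simultaneously for all colourings. Show by first-moment estimates that, with positive probability, $F$ contains no unwanted induced configurations on at most $k$ vertices spanning two lines. The density is tuned so that these bad configurations have expected count $o(1)$, while each line-piece still has enough edges to force the blue induced copy of $H$.

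The main obstacle is Step 3 combined with Step 4. A colouring is adversarial and chosen after $F$ is fixed, so we need a deterministic property of $F$, established in advance, that converts ``no red induced $G$'' into ``a dense blue $q$-partite pattern inside some line.'' The first attempt here would be a counting lemma. Because there is no red induced $G$, each line-piece must have a $1-\varepsilon(k)$ fraction of its $G$-copies containing a blue edge. Supersaturation then yields a blue blow-up of $H$, and the projective-plane intersection property guarantees that this blow-up is induced in $F$.
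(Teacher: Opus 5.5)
The paper does not prove this statement. It is quoted from \cite{KPR98} as background, so there is no in-paper proof to compare against, and your proposal has to stand on its own. As written it is not a proof, and the problems begin early.

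Step~1 is false. Every induced subgraph of a blow-up of $K_q$ (a complete $q$-partite graph) is itself complete multipartite. A general $q$-chromatic $H$ therefore sits there only as a non-induced subgraph, and the non-edges you lose are exactly what inducedness is about. Your fallback, a host containing every $q$-partite graph with parts of size $n$ as an induced subgraph, just restates the problem.

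Step~2 misidentifies what the projective plane buys you. Because two points lie on a unique line, $F[L]$ is exactly the piece placed on $L$. So a red induced $G$ found inside one piece is automatically induced in $F$; this is how the paper uses the plane in its proof of Theorem~\ref{upperbound}. The plane does not force copies of $G$ onto a single line; take $G$ a tree, for instance. Nor does it say anything about the blue $H$: its $n$ vertices necessarily span many lines, and each pair of them, edge or non-edge of $H$, is decided by the random piece on that pair's own line.

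The decisive gap is in Steps~3--4, where the theorem actually lives and where no argument is given.
\begin{itemize}
\item You never state a concrete property of $F$ that holds with positive probability and deterministically converts ``no red induced $G$'' into ``blue induced $H$''.
\item The ``split the colour classes in half and merge'' recursion does not say how the merge controls the pairs between the two halves. There $H$ has an arbitrary bipartite pattern that must be reproduced exactly: blue edges of $F$ on edges of $H$, and non-edges of $F$ on non-edges of $H$. So the cost of $n^{O(k)}$ per level, and hence the exponent $Ck\log q$, is asserted rather than derived.
\item The claim that sparsity upgrades non-induced containment to induced containment is likewise unproved.
\item A union bound over colourings is unavailable, since there are $2^{e(F)}$ of them.
\item The supersaturation fix cannot supply inducedness. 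Knowing that every induced copy of $G$ in a piece has a blue edge gives you many blue edges, or blue non-induced copies. Whether $n$ chosen vertices induce $H$ depends on the non-edges of $F$ among them, which no colouring argument controls. Your sentence ``the intersection property guarantees that this blow-up is induced'' is exactly the missing lemma.
\end{itemize}

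For comparison, the paper's proof of Theorem~\ref{upperbound} handles only the special target $F_n$: blue triangles through one point, lying on distinct lines. Even there, making their union induced across lines needs a separate mechanism, namely the conflict graph $\Gamma_v$ together with Caro--Wei. For an arbitrary $q$-chromatic $H$ you would need a much stronger, explicitly stated extension-type property of $F$. You would also need a genuine structural source for the $\log q$ factor.
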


In the same paper, they completely verified Conjecture \ref{con-1} for a restricted family known as "simple graphs".
Let $\mathcal{S}$ denote the smallest class of finite graphs that contains the 1-vertex complete graph and is closed under taking disjoint unions and joins. Following Erd\"{o}s and Hajnal \cite{EH89}, we call the graphs in $\mathcal{S}$ simple.

\begin{theorem}{\upshape \cite{KPR98}}\label{KPR}
For any simple graph \( G \), there is a constant \( f = f(G) \) that depends only on \( G \) such that, for any graph \( H \) on \( n \) vertices, we have  
\[
r_{\mathrm{ind}}(G, H) \leq n^f.
\]
\end{theorem}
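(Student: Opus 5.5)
The statement is the Kohayakawa--Pr\"omel--R\"odl theorem (Theorem~\ref{KPR}): the conjecture holds for simple graphs. The plan is structural induction on the construction of $G \in \mathcal{S}$. We build host graphs recursively, so that the exponent $f(G)$ is controlled by the number of join/union operations used. Write $R(G,n)$ for the least $N$ such that some $N$-vertex graph $F$ satisfies $F \xrightarrow{ind} (G,H)$ for every $H$ on $n$ vertices. It is convenient to prove the stronger statement that a single host graph works simultaneously for all $n$-vertex $H$. For instance, $F$ could be chosen to contain every $n$-vertex graph induced in a structured way, such as a suitable random graph or a projective-plane construction. The base case $G = K_1$ is trivial: $H$ itself, or a universal graph on $n^{O(1)}$ vertices containing all $n$-vertex graphs as induced subgraphs, suffices.

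\textbf{Disjoint union.} Let $G = G_1 \cup G_2$, with hosts $F_1, F_2$ for $G_1, G_2$ against $H$. Take $F$ to be a blow-up-type product in which each vertex of an auxiliary host $F'$ for $H$ is replaced by copies. The idea is as follows. If no blue induced $H$ appears, then in every relevant copy we find a red induced $G_1$. We then need a red induced $G_2$ that is anticomplete to it, and we get this from another copy, arranging the construction so that distinct copies are non-adjacent. More concretely, take $F = F_1 \,\dot\cup\, F_2$. Any coloring without a blue induced $H$ yields a red induced $G_1$ in $F_1$ and a red induced $G_2$ in $F_2$. There are no edges between $F_1$ and $F_2$, so their union is a red induced $G$. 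Hence $R(G_1 \cup G_2, n) \le R(G_1,n) + R(G_2,n)$, and this step is easy.

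\textbf{Join.} Let $G = G_1 + G_2$. This is the main step. The idea is to take $F$ as a \emph{lexicographic-type product}: a host $F_1$ for $(G_1,H)$ in which each vertex $v$ is substituted by a copy $F_2^{(v)}$ of a host for $(G_2,H)$. Vertices in $F_2^{(u)}$ and $F_2^{(v)}$ are complete to each other when $uv \in E(F_1)$. Now consider a coloring with no blue induced $H$. Each copy $F_2^{(v)}$ contains a red induced $G_2$, say $X_v$. The difficulty is that the cross edges between $X_u$ and $X_v$ may be blue. Moreover, we need a red induced $G_1$ whose vertices are completely joined \emph{in red} to a common red $G_2$, and not merely to distinct ones.

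To handle this, I would first strengthen the induction hypothesis to a "many copies" or supersaturation form. Specifically, in the host for $G_2$, every coloring without a blue induced $H$ should contain a red induced $G_2$ in a positive fraction (at least $n^{-c}$) of a designated polynomial-size family of $|V(G_2)|$-sets. Then we apply a pigeonhole or dependent-random-choice argument. For each vertex $w$ of a larger host, the set of red-neighbour configurations is recorded. A vertex set on which a fixed red copy $X$ of $G_2$ is red-complete has polynomially large density, so within that common red neighbourhood we can find the blown-up structure. In that neighbourhood we either find a blue induced $H$ or, via the hypothesis for $G_1$, a red induced $G_1$. For that red induced $G_1$ to join with $X$ into an \emph{induced} $G$, the host must have exactly the product adjacency, which the lexicographic structure guarantees. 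The number of copies multiplies at each join, giving $R(G_1+G_2,n) \le R(G_1,n)^{O(1)} R(G_2,n)^{O(1)}$, which keeps the bound polynomial.

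I expect the main obstacle to be the join step: securing a common red neighbourhood that is large and structured enough to reapply the $G_1$ hypothesis without losing more than a polynomial factor. I would first try the simultaneous-host strengthening together with a counting argument over all copies of $G_2$: if every copy of $G_2$ has blue edges to most candidate $G_1$-locations, then these blue edges should assemble into a blue induced $H$. Ensuring that this assembled $H$ is \emph{induced} is where the random or projective-plane host is needed, since it controls non-edges.
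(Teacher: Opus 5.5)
The paper gives no proof of this statement. It is quoted from Kohayakawa, Pr\"omel and R\"odl as background, and the paper only remarks that their host graphs are random constructions built on projective planes. So your proposal has to stand on its own, and as written it has a genuine gap in the join step.

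Your base case and disjoint-union step are fine. If $F_i\xrightarrow{ind}(G_i,H)$ for $i=1,2$, then any coloring of $F_1\,\dot\cup\,F_2$ with no blue induced $H$ restricts to such colorings of $F_1$ and $F_2$. The two red copies obtained are anticomplete, so together they form a red induced $G_1\cup G_2$.

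The join step carries the entire content of the theorem, and it is not proved. You never fix a host. The lexicographic product $F_1[F_2]$ (each vertex of $F_1$ replaced by a copy of $F_2$) is introduced and its defect acknowledged. It is then replaced by an unspecified ``larger host'' with a ``blown-up structure.''

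Moreover, the arrowing properties of $F_1,F_2$ cannot by themselves drive a product construction of this kind. For $G_1=G_2=K_1$ the one-vertex graph is a valid host, since its vertex is a red induced $K_1$. Yet $K_1[K_1]=K_1$ does not arrow $(K_2,H)$ once $|V(H)|\ge 2$. Your supersaturation strengthening does not help, because it holds trivially for $K_1$ in the one-vertex host. So the host for $G_1+G_2$ needs an additional, genuinely new property relative to $H$, and that property has to be constructed and verified. This is the missing idea.

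Concretely, two steps of your sketch are asserted without justification.

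\begin{enumerate}
\item You claim some red copy $X$ of $G_2$ is red-complete to a set of polynomially large density. The assumption ``no blue induced $H$'' says nothing about the red edges leaving a particular red copy of $G_2$; the adversary chooses them freely. The needed dichotomy is: either some red $G_2$ has a large common red neighbourhood, or the blue cross edges contain an induced $H$. That is exactly what needs proof, and your last paragraph only states it as a hope. Assembling a blue \emph{induced} $H$ from scattered blue edges is the central difficulty of induced Ramsey theory, not something the host ``controls'' automatically.
\item Even given a large common red neighbourhood $N$, you apply the hypothesis for $G_1$ inside $F[N]$. But $N$ is an adversarially chosen vertex set, and $F_1\xrightarrow{ind}(G_1,H)$ says nothing about arbitrary subsets of a host. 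You would need a robust form of the induction hypothesis, for instance that every vertex subset of polynomial density in the host still arrows $(G_1,H)$. You would also have to show that this robust form survives both unions and joins.
\end{enumerate}

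By contrast, the induced condition across the join is automatic once every cross pair is a red edge, so the lexicographic adjacency is not what is needed there.

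Without these ingredients, the recursion $R(G_1+G_2,n)\le R(G_1,n)^{O(1)}R(G_2,n)^{O(1)}$ is a heuristic rather than a consequence of anything proved. The proposal therefore does not establish the theorem.
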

A natural question arises regarding the precise asymptotic behavior of the induced Ramsey number when the target graph has a highly structured, star-like topology but grows arbitrarily large. To this end, we study the \textit{fan graph} $F_n$, defined as a graph with one central vertex, $2n$ leaf vertices, and $n$ disjoint edges (called matching edges) that pair the leaves. Each matching edge forms a triangle with the central vertex. Finding an induced copy of $F_n$ is notoriously rigid, as it must preserve all these triangular edges while ensuring no other edges exist between distinct triangular leaves.

Motivated by Conjecture \ref{con-1}, we 
use the probabilistic methods (random projective plane constructions) and the Caro-Wei lemma to prove the following quadratic upper bound for fixed $G$:
\begin{theorem}\label{upperbound}
Let $G$ be a fixed graph. There exists a constant $C = C(G)$, depending only on $G$, such that
\[
r_{\mathrm{ind}}(G,F_n) \le C n^2.
\]
\end{theorem}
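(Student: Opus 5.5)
The plan is to build an explicit host graph on $\Theta(n^2)$ vertices from a projective plane, decorated by random copies of a fixed ``gadget'' graph that depends only on $G$, as in the proof of Theorem~\ref{KPR}. Throughout, \emph{blue} is the colour in which we look for an induced $F_n$ and \emph{red} the colour for an induced $G$.

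\textbf{The gadget.} Note that $F_n=K_1+nK_2$ is simple, and its only ``local'' building block is the triangle $K_3$. Since $K_3$ is complete, there is a fixed graph $R=R(G)$ with $R\xrightarrow{ind}(G,K_3)$: every red/blue colouring of $E(R)$ gives a red induced $G$ or a blue triangle. Let $s=|V(R)|$. Taking $t$ vertex-disjoint copies of $R$ gives a graph $R_t$ on $ts$ vertices in which any colouring yields a red induced $G$ or $t$ vertex-disjoint blue triangles.

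\textbf{The host graph.} Let $q$ be a prime power with $q=\Theta(n)$ (the constant depends on $G$), and take the projective plane $PG(2,q)$. It has $N=q^2+q+1$ points and equally many lines, each line has $q+1$ points, and any two lines meet in exactly one point. On each line $L$ place a copy of $R_t$ with $t=\lfloor (q+1)/s\rfloor$, via a uniformly random injection of $V(R_t)$ into the points of $L$. Let the host $\Gamma$ be the union of these edge sets. Two distinct points lie on exactly one common line, so every edge of $\Gamma$ belongs to exactly one line-gadget. As in \cite{KPR98}, the key probabilistic claim is that with positive probability every copy of $G$ (respectively, every triangle) lying inside one gadget is also induced in $\Gamma$. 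This holds because an extra edge between two vertices of a gadget on $L$ would have to come from another line, and two points of $L$ determine only $L$. So gadget-induced copies are automatically $\Gamma$-induced, and this part needs no probability.

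\textbf{Colouring argument.} Fix a colouring with no red induced $G$. Each line then carries $t=\Theta(q)$ vertex-disjoint blue triangles, giving $\Theta(q^3)$ blue triangles spread over $\Theta(q^2)$ points. By averaging, some point $v$ is a vertex of $\Theta(q)\ge c\,n$ blue triangles. These triangles lie on distinct lines through $v$, because within a line they are vertex-disjoint. For each such triangle $vx_iy_i$, the pair $\{x_i,y_i\}$ lies on the line $L_i$ through $v$. We need $n$ of these pairs with no $\Gamma$-edges between different pairs; edges from $v$ to the leaves and inside each pair are automatic. Form a conflict graph $\mathcal{C}$ on the pairs, joining two pairs when some $\Gamma$-edge connects them. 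An edge $x_iy_j$ lies on the unique line $\langle x_i,y_j\rangle$, and the random placement makes it an edge with probability about $\deg_R/q=O(1/q)$. Hence $\mathbb{E}[e(\mathcal{C})]=O(q)$ over the $\Theta(q)$ pairs, so $\mathcal{C}$ has bounded average degree. Caro--Wei then gives an independent set of size $\Omega(q)\ge n$ once the constant in $q=\Theta(n)$ is chosen large enough. That independent set, together with $v$, is a blue induced $F_n$ on $N=O(n^2)$ vertices.

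\textbf{Main obstacle.} The delicate step is the last one. The conflict bound must hold \emph{simultaneously} for every colouring, but the colouring is chosen after the random placement and selects which triangles we use. I would handle this by proving a deterministic-looking statement that holds w.h.p.: for every point $v$ and every choice of one pair from each of $\Omega(q)$ lines through $v$, the number of $\Gamma$-edges between chosen pairs is $O(q)$. This would follow from a Chernoff bound plus a union bound over all choices. If the union bound is too lossy (there are roughly $q^{2q}$ choices), the fallback is to bound for each $v$ the total number of $\Gamma$-edges among all $\Theta(q^2)$ points on lines through $v$ that are joined to $v$. Each such point has $O(1)$ $\Gamma$-neighbours outside its own line through $v$ w.h.p., so every choice of pairs automatically gives a conflict graph of bounded maximum degree, and Caro--Wei (or greedy selection) finishes the argument.
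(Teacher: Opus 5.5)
\textbf{Verdict: there is a genuine gap, in the final step.} Your construction, the inducedness argument and the averaging step are sound, and they essentially mirror the paper's: a projective plane of order $\Theta(n)$, randomly placed disjoint copies of a gadget $R\to(G,K_3)$ on each line, a point $v$ lying in $\Theta(q)$ blue triangles on distinct lines, and Caro--Wei on a conflict graph. The gap is exactly the step you flag and leave open. The pairs $\{x_i,y_i\}$ are selected by the colouring after the placement, so $\mathbb{E}[e(\mathcal{C})]=O(q)$ for a \emph{fixed} family of pairs proves nothing. Neither of your remedies works as written.

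\emph{Route (a).} With arbitrary pairs on each line there are $q^{\Theta(q)}$ choices. For one fixed choice the number of cross edges has mean $\Theta(q)$ and exceeds $Kq$ only with probability $e^{-O_K(q)}$, so the union bound cannot win. (It could only be rescued by restricting to the $O(s^2)$ pairs in the copy of $R$ through $v$ on each line, which you do not do.) Also, cross edges lying on a common line $\langle x_i,y_j\rangle$ are dependent, so Chernoff does not apply verbatim.

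\emph{Route (b).} The claim that w.h.p.\ every candidate point has $O(1)$ cross-neighbours, simultaneously for all $v$, is false. Fix $v$ and $x\in N_\Gamma(v)$, and count the $y\in N_\Gamma(v)\setminus\langle v,x\rangle$ adjacent to $x$. This count is roughly Poisson with constant mean, so its maximum over the $\Theta(q^3)$ pairs $(v,x)$ grows like $\log q/\log\log q$. A maximum-degree bound of that size gives only $\Omega(q\log\log q/\log q)$ from Caro--Wei. That forces $q$ to be superlinear in $n$ and destroys the $O(n^2)$ bound. Two smaller slips: the candidate leaves lie in $N_\Gamma(v)$, which has $\Theta(q)$ points, not $\Theta(q^2)$; and read literally, ``$O(1)$ neighbours outside its own line'' is false outright, since in your fully covered construction every point has degree $\Theta(q)$.

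\textbf{The missing idea} is to control a colouring-independent quantity and to use only post-deletion or average degree.

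\emph{How the paper does it.} It defines conflicts between the gadget copies containing $v$ rather than between selected triangles: $A\sim B$ if some edge joins $V(A)\setminus\{v\}$ to $V(B)\setminus\{v\}$. It bounds the expected conflict degree by a constant. A copy is called bad if its conflict degree exceeds $\Delta$ at one of its vertices; by Markov this has probability at most $1/4$, and bad copies are deleted from the host. Then every local conflict graph has maximum degree at most $\Delta$ deterministically, and every blue triangle inherits the independence of its copy. Averaging still gives at least $\frac94 M$ blue triangles at some $v_0$, and Caro--Wei yields $n$ of them.

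\emph{Repairing your fallback directly.} Let $Z_v$ be the number of edges joining points of $N_\Gamma(v)$ that lie on distinct lines through $v$.
\begin{itemize}
\item The lines $\langle v,x\rangle$, $\langle v,y\rangle$, $\langle x,y\rangle$ are distinct, so $\mathbb{E}[Z_v]\le N^2\bigl(2e(R)/(sq)\bigr)^3=O_s(q)$.
\item Fix a placement with $\sum_v Z_v\le 2\,\mathbb{E}\bigl[\sum_v Z_v\bigr]=O_s(q^3)$.
\item Given a colouring, each point lies in at most $q+1$ selected blue triangles. Hence at least $\Omega(N/s)$ points lie in at least $3t/2$ of them, and one of these has $Z_v=O_s(q)$.
\item Turán's form $\alpha\ge k^2/(2e+k)$ of Caro--Wei then gives $\Omega_s(q)\ge n$ pairwise non-conflicting triangles, once $q\ge K_s n$.
\end{itemize}
This needs no concentration and no union bound over choices.
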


It is worth noting the fundamental distinction between our Theorem \ref{upperbound} and the classical bounds. If one were to apply Theorem \ref{KPR} to evaluate $r_{\mathrm{ind}}(G, F_n)$, the resulting bound would be $O((2n+1)^{f(G)})$. The exponent $f(G)$ is a recursively defined constant computed along the decomposition tree of $G$. Consequently, as the structure of the fixed graph $G$ becomes more complex, this recursive exponent inflates rapidly, resulting in a polynomial bound of an excessively high degree. In stark contrast, our Theorem \ref{upperbound} completely bypasses this recursive inflation. By exploiting the geometric properties of the projective plane, we eliminate the dependence of the polynomial's degree on $G$. We successfully restrict the impact of $G$ entirely to the leading coefficient $C(G)$, collapsing the upper bound to a tight, absolute quadratic order of $O(n^2)$.

While Theorem \ref{upperbound} establishes a robust universal asymptotic bound for any fixed $G$, determining the exact values or tight linear bounds for specific sparse graphs remains a mathematically challenging endeavor. To sharpen these bounds further for specific graph families, we restrict our attention to the case where $G$ is a star graph $K_{1,\ell}$. Since $F_1\simeq K_3$, it follows from a result by Gorgol~\cite{Gorgol01} that $r_{\mathrm{ind}}(K_{1,\ell},F_1)=r_{\mathrm{ind}}(K_{1,\ell},K_3)=3\ell$.
For general fan graph $F_n$, using delicate constructive coloring and matching arguments, we obtain the following tight bounds, which are exact for $\ell=2$:
\begin{theorem}\label{star-fan-bound}
Let $n$, $\ell$ be positive integers with $\ell\leq n$. Then
$$
2n+2\ell-1\leq r_{\mathrm{ind}}(K_{1,\ell},F_{n})\leq (\ell+n-1)(\ell+1)+1.
$$
Furthermore, if $\ell=2$, then 
$r_{\mathrm{ind}}(K_{1,2},F_{n})=3n+4$, which implies that the upper bound is sharp.
\end{theorem}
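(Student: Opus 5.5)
We would prove the three parts of Theorem~\ref{star-fan-bound} separately: an explicit host graph for the upper bound, an explicit colouring of an arbitrary host for the lower bound, and a sharpened version of both for $\ell=2$. Throughout, we forbid a red induced $K_{1,\ell}$ and a blue induced $F_n$.

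\textbf{Upper bound.} Let $m=\ell+n-1$. Take $m$ vertex-disjoint copies $S_1,\dots,S_m$ of $K_{1,\ell}$ and add one vertex $v$ joined to all of them. This host graph $\Gamma$ has $m(\ell+1)+1$ vertices. Fix a red/blue colouring of $\Gamma$ with no red induced $K_{1,\ell}$.
\begin{enumerate}
\item First we locate all induced copies of $K_{1,\ell}$ in $\Gamma$. Such a copy centred at a vertex of some $S_i$ cannot contain $v$ together with another leaf, because $v$ is adjacent to everything. Hence it is $S_i$ itself. Every other induced copy is centred at $v$ and has independent leaves.
\item Consequently every $S_i$ contains a blue edge.
\item Next, $v$ sends red edges into at most $\ell-1$ of the copies. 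Otherwise we could choose one red neighbour of $v$ in each of $\ell$ distinct copies; these are pairwise non-adjacent, giving a red induced star centred at $v$.
\item So at least $n$ copies are joined to $v$ entirely in blue. We pick one blue edge inside each of them. These edges form an induced matching, since there are no edges between different copies. Together with $v$ they give a blue induced $F_n$.
\end{enumerate}

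\textbf{Lower bound.} Let $F$ be any graph on $N=2n+2\ell-2$ vertices. We will try to colour $F$ by choosing a partition $V(F)=X\cup Y$ with $|X|\le 2n$ and $|Y|\le 2\ell-2$. Edges inside a part are coloured blue and edges between the parts red.
\begin{enumerate}
\item Any blue induced $F_n$ is connected and has $2n+1>\max(|X|,|Y|)$ vertices (using $\ell\le n$), so none exists.
\item A red induced $K_{1,\ell}$ needs a centre in one part with $\ell$ pairwise non-adjacent $F$-neighbours in the other part.
\item The task is therefore to choose the partition so that this fails. For $Y$, any independent set inside a $(2\ell-2)$-set is too small once $Y$ is split suitably. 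For $X$, we would choose $X$ to minimise the number of red induced stars. Alternatively, we would pick $Y$ greedily as a maximal set whose vertices have at most $\ell-1$ independent neighbours across.
\end{enumerate}
If a plain bipartition does not suffice, we would refine to more parts, each of size at most $2n$ and colour internally blue. We would then use that an independent set of $\ell$ vertices in the union of the other parts forces a large independent set in $F$. That in turn lets us recolour a star's edges blue without creating an $F_n$, since $F_n$ has independence number $n+1$ and is very triangle-rich. I expect this step to be the main obstacle: producing, for an arbitrary $F$ on $2n+2\ell-2$ vertices, a colouring that works uniformly.

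\textbf{The case $\ell=2$.} The upper bound gives $(n+1)\cdot 3+1=3n+4$. For the matching lower bound we must colour every $F$ on $3n+3$ vertices. Forbidding a red induced $P_3$ means every red path $u w x$ must have $ux\in E(F)$, so each red component spans a clique of $F$.
\begin{enumerate}
\item We would take a maximum family of vertex-disjoint triangles or edges of $F$ and colour them, and suitable cliques containing them, red. Everything else is coloured blue.
\item We would then show that a blue induced $F_n$ would need $n$ pairwise non-adjacent blue edges in the neighbourhood of its centre.
\item Finally, a counting argument on the $3n+3$ vertices would contradict the maximality of the chosen family, in the spirit of the proof of $r_{\mathrm{ind}}(K_{1,\ell},K_3)=3\ell$ by Gorgol~\cite{Gorgol01}.
\end{enumerate}
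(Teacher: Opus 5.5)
\textbf{Upper bound.} Your upper bound is correct and coincides with the paper's proof: a universal vertex is added to $(\ell+n-1)K_{1,\ell}$, each star contains a blue edge, and at most $\ell-1$ stars receive red edges from the apex.

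\textbf{Lower bound $2n+2\ell-1$.} This is where the gap is. You never exhibit a colouring, and the scheme you start from cannot work. Suppose blue edges are exactly those inside the parts, and a blue $F_n$ is excluded only because every part has at most $2n$ vertices. Then for $\ell\ge 2$ the star $K_{1,2n+2\ell-3}$ defeats every admissible partition. The part not containing the centre has at least $2\ell-2\ge\ell$ vertices, all leaves, pairwise non-adjacent and joined to the centre in red. The refinements you sketch (more parts, recolouring stars) are never carried out.

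The missing idea is to make the red graph have maximum degree at most $\ell-1$, so there is no red $K_{1,\ell}$ at all, while it contains a maximum matching $M=\{x_iy_i\}$ with $k=|M|$. The paper proceeds as follows.
\begin{itemize}
\item The set $I=V\setminus V(M)$ is independent, and by an augmenting-path argument every $y_i$ has at most one neighbour in $I$.
\item Besides $M$, the paper colours red a subgraph between $X=\{x_i\}$ and $I$ in which each $x$ has degree exactly $\max\{e(x,I)-2n+k,0\}\le\ell-2$ and each vertex of $I$ has degree at most $\ell-1$. Its existence comes from max-flow/min-cut.
\item A blue induced $F_n$ uses at most one end of each edge of $M$. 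Since $k<2n$, its centre cannot lie in $I$ or $Y$. If the centre lies in $X$, it has at most $2n-k$ blue neighbours in $I$, which leaves at most $2n$ vertices in total, a contradiction.
\end{itemize}

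\textbf{The case $\ell=2$.} Your outline is not a proof, and its last step fails as stated. Colouring red an optimal family of disjoint triangles and edges can leave a blue induced $F_n$, so no counting argument can contradict maximality.

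Let $a$ be joined to every vertex of $n\ge 2$ disjoint paths $u_sv_sw_s$ and of two isolated vertices $p,q$; this graph has $3n+3$ vertices.
\begin{itemize}
\item The family $\{ap\}\cup\{u_sv_s:1\le s\le n\}$ has the most members and covers the most vertices. It also induces a minimum clique partition.
\item Yet after colouring it red, $\{a\}\cup\{v_s,w_s:1\le s\le n\}$ induces a blue $F_n$.
\item The family $\{au_1v_1\}\cup\{u_sv_s:2\le s\le n\}$ gives a clique partition of the same size, and it does work.
\end{itemize}
So you need a selection rule together with a structural analysis.

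The paper takes a minimum clique cover (an optimal colouring of $\overline{G}$) with the most singleton classes. Even this is not enough: for $a$ joined to $K_2\cup nP_3$ it may pick the triangle through the $K_2$, and a blue $F_n$ survives. The paper therefore shows that a surviving blue induced $F_n$ forces a rigid structure:
\begin{itemize}
\item class sizes $3,1,2,\dots,1,2$;
\item the centre $a_1$ is adjacent to all other vertices;
\item $G-a_1$ has matching number $n+1$;
\item the $2$-element classes together induce exactly a perfect matching.
\end{itemize}
It then gives explicit recolourings $\chi_4$ and $\chi_5$ for this structure.

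Finally, forbidding a red induced $P_3$ does not force red components to span cliques. For example, take $K_4$ minus the edge $uy$, with $uw,wx,xy$ red and $ux,wy$ blue. The paper's $\chi_5$ in fact uses such a non-clique red component, so restricting to clique colourings is a choice you make, not a consequence of the hypothesis.
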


\section{Preliminaries}

Let $G$ be an undirected graph. For a vertex $v \in V(G)$, its neighborhood $N_{G}(v)$ is the set of vertices adjacent to $v$, and its closed neighborhood is $N_{G}[v] = N_{G}(v) \cup \{v\}$. The degree of a vertex $v \in V(G)$ is $d_{G}(v) = |N_{G}(v)|$. We denote the minimum degree and maximum degree of $G$ by $\delta(G)$ and $\Delta(G)$, respectively, where $\delta(G) = \min\limits_{v \in V(G)}\{d_{G}(v)\}$ and $\Delta(G) = \max\limits_{v \in V(G)}\{d_{G}(v)\}$. 
For a set of vertices $S\subseteq V(G)$, $G[S]$ is the subgraph of $G$ induced by $S$.
For two disjoint subset $S$ and $T$ of $V(G)$, let $E_G(S,T)$ denote the set of edges of $G$ with one end in $S$ and the other end in $T$, and let $e_G(S,T)=|E_G(S,T)|$.
In particular, we write $E_G(v,T)$ or $e_G(v,T)$ instead of $E_G(\{v\},T)$ or $e_G(\{v\},T)$ for a vertex $v$.

The \textit{clique number} of $G$, denoted by $\omega (G)$, is the size of the largest clique in $G$.
We denote the \textit{independence number} of $G$ by $\alpha(G)$, which is the size of the largest independent set in $G$.
Note that $\omega(G) = \alpha(\overline{G})$, where $\overline{G}$ is the complement of $G$.
The \textit{chromatic number} of $G$, denoted by $\chi(G)$, is the minimum number of colors needed to color the vertices of $G$ such that no two adjacent vertices share the same color.

We denote by $K_n$ the complete graph on $n$ vertices, by $K_{1,\ell}$ the star graph with one central vertex and $\ell$ leaf vertices. 
A graph is called \textit{triangle-free} if it contains no $K_3$ (triangle) as a subgraph.
Undefined graph-theoretical terms and notation follow \cite{BM}.

A finite projective plane \cite{Hirschfeld1998} of order $p$ is an incidence structure
$\mathcal P=(V,\mathcal L)$ satisfying the following properties:
\begin{itemize}
\item $|V|=p^2+p+1$ and $|\mathcal L|=p^2+p+1$;
\item every line $L\in\mathcal L$ contains exactly $p+1$ vertices;
\item every vertex $v\in V$ lies on exactly $p+1$ lines;
\item any two distinct lines intersect in exactly one vertex.
\item Any two distinct vertices determine a unique line.
\end{itemize}
It is well known that finite projective planes exist whenever $p$ is a prime power.

The following lower bound of the independence number of a graph was given by Caro and Wei~\cite{AS16}.

\begin{lemma}{\upshape \cite{AS16}}\label{Caro–Wei}
    Every graph $G$ contains an independent set of size at least
    $$\alpha(G)\geq\sum_{v\in V(G)}\frac{1}{d_{G}(v)+1}.$$
\end{lemma}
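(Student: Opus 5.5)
We prove Lemma~\ref{Caro–Wei} by the standard random-ordering argument, which produces an independent set whose expected size is exactly the right-hand side. Let $\sigma$ be a permutation of $V(G)$ chosen uniformly at random. Define
\[
I_\sigma=\{v\in V(G): \sigma(v)<\sigma(u)\ \text{for every } u\in N_G(v)\},
\]
the set of vertices that come before all of their neighbours in the ordering $\sigma$.

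First we check that $I_\sigma$ is always independent. Suppose $u,v\in I_\sigma$ were adjacent. Since $u\in N_G(v)$, membership of $v$ gives $\sigma(v)<\sigma(u)$. Since $v\in N_G(u)$, membership of $u$ gives $\sigma(u)<\sigma(v)$. These contradict each other.

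Next we compute $\mathbb{E}|I_\sigma|$ by linearity of expectation. For a fixed vertex $v$, the event $v\in I_\sigma$ depends only on the relative order of the $d_G(v)+1$ vertices of $N_G[v]$. Under a uniform permutation, each of these vertices is equally likely to come first in that relative order. Hence
\[
\Pr[v\in I_\sigma]=\frac{1}{d_G(v)+1}.
\]
An isolated vertex has $d_G(v)=0$ and lies in $I_\sigma$ with probability $1$, so the formula holds there too. Summing over all vertices,
\[
\mathbb{E}|I_\sigma|=\sum_{v\in V(G)}\frac{1}{d_G(v)+1}.
\]

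Finally, some permutation $\sigma$ satisfies $|I_\sigma|\ge\mathbb{E}|I_\sigma|$. The corresponding $I_\sigma$ is independent, so $\alpha(G)\ge\sum_{v}1/(d_G(v)+1)$.

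No step here is a real obstacle. The only point needing care is the symmetry claim that $v$ is first within $N_G[v]$ with probability exactly $1/(d_G(v)+1)$. This holds because restricting a uniform random permutation to any fixed subset gives a uniform ordering of that subset.

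If a deterministic argument is preferred, we would instead use a greedy procedure with induction on $|V(G)|$. At each step, pick a vertex $v$ of minimum degree, put it in the independent set, and delete $N_G[v]$. The deleted vertices $w\in N_G[v]$ each satisfy $d_G(w)\ge d_G(v)$, so together they contribute at most $\sum_{w\in N_G[v]}1/(d_G(w)+1)\le 1$ to the target sum. Deleting vertices only lowers the degrees of the remaining vertices, so the target sum of the remaining graph does not decrease. Applying induction to $G-N_G[v]$ then gives the bound.
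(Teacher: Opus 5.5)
Your random-ordering argument is correct, and it is the standard proof given in Alon and Spencer's book, which the paper cites rather than reproving. Your alternative greedy minimum-degree induction is also valid.
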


\section{General upper and lower bounds}

We study the induced Ramsey number $r_{\mathrm{ind}}(H, F_n)$ where $F_n$ is the fan graph consisting of $n$ triangles sharing a common vertex. The proof uses the random graphs based on projective planes and probabilistic methods.

\newtheorem*{mainthm1}{\rm\bf Theorem~\ref{upperbound}}
\begin{mainthm1}[Restated]
Let $G$ be a fixed graph. There exists a constant $C = C(G)$, depending only on $G$, such that
\[
r_{\mathrm{ind}}(G,F_n) \le C n^2.
\]
\end{mainthm1}

\begin{proof}
By the induced Ramsey theorem, there exists a finite graph $G^*$ such that
\[
G^* \xrightarrow{\mathrm{ind}} (G,K_3).
\]
Let $t^* = |V(G^*)|$. We choose constants satisfying the following conditions:
\begin{enumerate}
    \item Let $\Delta = 2t^*$;
    \item Choose $C_2$ sufficiently large such that
    \[
    \frac{9C_2}{4(\Delta + 1)} \ge 1;
    \]
    \item Choose $C_1$ sufficiently large relative to $C_2$ and $t^*$ to satisfy 
    $$\frac{C_2^2 (t^*)^5}{C_1^2} \le \frac{\Delta}{4t^*}=\frac{1}{2}.$$
   
\end{enumerate}
By Bertrand's Postulate,
we can choose a prime $p$ satisfying
\[
C_1 n \le p \le 2C_1 n,
\]
which exists for all sufficiently large $n$.
Let $\mathcal P=(V,\mathcal L)$ be a finite projective plane of order $p$.
Thus, $|V|=p^2+p+1=\Theta(n^2)$, each line contains exactly $p+1$ vertices, each vertex lies on exactly $p+1$ lines, and any two distinct lines intersect
in exactly one vertex.

Let $M=C_2 n$.
For each line $L\in\mathcal L$, independently choose $M$ pairwise disjoint
vertex subsets 
\[
S_{L,1},\dots,S_{L,M}\subseteq L
\]
uniformly among all $t^*$-subsets of $L$.  
This is feasible since
\[
M t^* =C_2 n t^* \le p+1
\]
for sufficiently large $n$ and $C_1$ is chosen sufficiently large
with respect to $C_2$ and $t^*$.
On each $S_{L,i}$, we embed a copy of $G^*$, denoted by $G_{L,i}^*$.
Let $\mathcal{F}_{\text{total}}=\{G_{L,i}^*\mid L\in \mathcal{L}, i\in \{1,2,\ldots ,M\}\}$ denote the collection of all such embedded copies of $G^*$.

Let $R_{0}$ be the resulting graph.
Its vertex set is that of the projective plane, containing $p^2+p+1$ vertices. The edge set is the union of all edges belonging to these embedded copies.
Specifically, if $u, v$ are adjacent in $R_{0}$, there must exist a line $L$ such that $\{u, v\} \subset L$ and $\{u, v\}\in V(G_{L,i}^*)$. 
Furthermore, if two vertices lie on different lines and do not both lie in the same embedded copy of $G^*$, then there is no edge between them.

For a vertex $v \in V$, we say that a copy of $G^*$ \emph{contains} $v$, or equivalently, that $v$ is \emph{covered} by this copy, if $v$ belongs to the vertex set of that copy.
Let $\mathcal{F}_v = \{G_{L,i}^* \mid G_{L,i}^* \in \mathcal{F}_{\text{total}}, v \in V(G_{L,i}^*) \}$ be the set of embedded copies of $G^*$ containing $v$.
Let $\mathcal{L}_v = \{L_1, L_2, \dots, L_{p+1}\}$ denote the set of the $p+1$ distinct lines passing through the vertex $v$.
We define the random variable $X$ to be the total number of embedded copies in $\mathcal{F}_{\text{total}}$ that contain $v$. It means that, $X = |\mathcal{F}_v|$.
Then, we decompose $X$ as a sum of random variables contributions from each line
$$X = \sum_{L \in \mathcal{L}_v} X_L,$$
where $X_L$ is the number of copies on line $L$ that contain $v$.

Consider a fixed line $L \in \mathcal{L}_v$. Let $U_L = \bigcup_{i=1}^M S_{L,i}^{*}$ be a subset of $L$ with $|U_L|=Mt^*$.
By our construction, we chose $M$ pairwise disjoint subsets $S_{L,1}, \dots, S_{L,M} \subset L$, each of size $t^*$,
hence it follows that the vertex $v$ can belong to at most one copy on line $L$.
Thus, $X_L$ is an indicator random variable, that is,
$$X_L =
\begin{cases}
1 & \text{if } v \in U_L, \\
0 & \text{otherwise.}
\end{cases}$$
Since the positions are chosen uniformly at random, then the probability that $v$ is covered by $U_L$ is exactly
$$\mathbb{E}[X_L] = 1\cdot\Pr(X_L = 1)+0\cdot\Pr(X_L = 0) = \frac{|U_L|}{|V(L)|} = \frac{M t^*}{p+1}.$$
By the linearity of expectation, we have
$$\mathbb{E}[X] = \sum_{L \in \mathcal{L}_v} \mathbb{E}[X_L] = \sum_{i=1}^{p+1} \frac{M t^*}{p+1}.$$
Thus,
$$\mathbb{E}[|\mathcal{F}_v|] = (p+1) \cdot \frac{M t^*}{p+1} = M t^*.$$

Consider two distinct copies $A, B \in \mathcal{F}_v$. They lie on distinct lines $L_A$ and $L_B$ intersecting at $v$. 
Set $V_A=V(A)\setminus \{v\}$ and $V_B=V(B)\setminus \{v\}$.
We say that two copies $A, B \in \mathcal{F}_v$ are in \emph{conflict}, denoted by $A \sim B$, if there exist $x \in V_A$ and $y \in V_B$ such that $x$ and $y$ are adjacent in $R_{0}$ (see Figure. \ref{fig1}).
Using this relation, we define the \emph{local conflict graph} $\Gamma_v$. 
Its vertex set is $\mathcal{F}_v$. Two vertices $A, B \in \mathcal{F}_v$ are adjacent in $\Gamma_v$ if and only if they are in conflict in $R_{0}$. Note that the degree of a vertex $A$ in $\Gamma_v$ equals the number of copies in $\mathcal{F}_v$ that are in conflict with $A$.

\begin{figure}[!htbp]
\centering
\includegraphics[width=8cm]{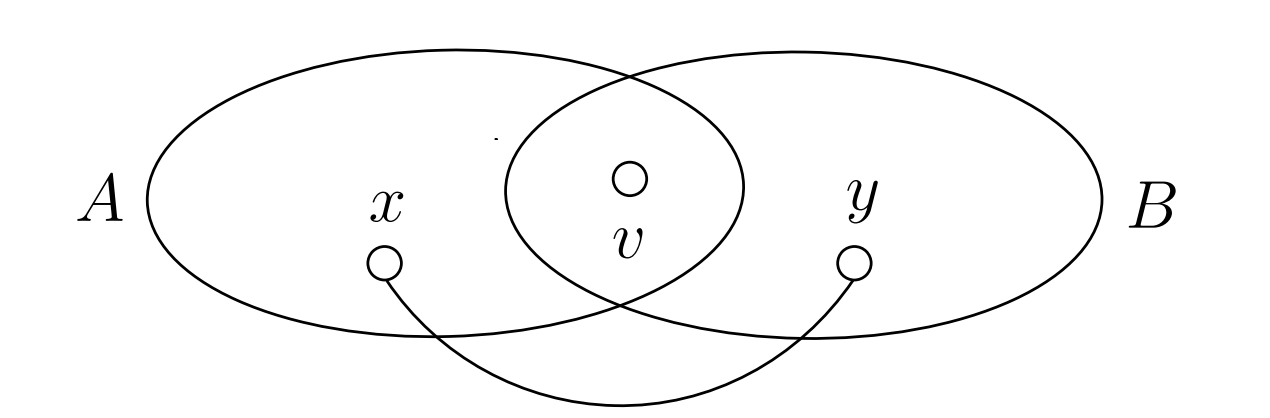}
\caption{Illustration of conflict between copies $A$ and $B$ in $R_{0}$.}\label{fig1}
\end{figure}

Suppose that two copies $A, B\in \mathcal{F}_v$ are in conflict due to $x\in V_A$ and $y\in V_B$.
A pair $\{x, y\}$ determines a unique line $L_{xy}$. An edge exists between any two vertices on $L_{xy}$ if and only if they are covered by the same embedded copy on $L_{xy}$.
Since there are disjoint $M$ copies of $G^*$ on $L_{xy}$, the number of edges joining two vertices on $L_{xy}$ are at most $M\binom{t^*}{2}$.
Thus, the probability $P_{xy}$ of an edge between $x$ and $y$ is bounded by
$$P_{xy} \leq\frac{M \binom{t^*}{2}}{\binom{p+1}{2}} = \frac{M t^*(t^*-1)}{(p+1)p} < \frac{M (t^*)^2}{p^2}.$$
By the Union Bound, the probability that $A$ and $B$ conflict is
$$\Pr(A \sim B)\le \Pr\left(\bigcup_{x \in V_A}\bigcup_{y \in V_B} (x \sim y)\right) \le \sum_{x \in V_A} \sum_{y \in V_B} P_{xy} = (t^*-1)^2 P_{xy} < \frac{M (t^*)^4}{p^2}.$$
Substituting $M = C_2 n$ and $p \ge C_1 n$, we have
$$\Pr(A \sim B) < \frac{C_2 n (t^*)^4}{(C_1 n)^2} = \frac{C_2 (t^*)^4}{C_1^2 n}.$$
Thus the expected degree of a fixed vertex $A$ in $\Gamma_v$ is
$$\mathbb{E}[d_{\Gamma_v}(A)] \le (M t^*) \cdot \Pr(A \sim B) < (C_2 n t^*) \cdot \frac{C_2 (t^*)^4}{C_1^2 n} = \frac{C_2^2 (t^*)^5}{C_1^2}.$$
By our choice of $C_1$, we have
$$\mathbb{E}[d_{\Gamma_v}(A)] \le \frac{\Delta}{4t^*}.$$

A copy \(A \in \mathcal{F}_{\text{total}}\) is said to be \emph{good} provided that \(d_{\Gamma_v}(A) \le \Delta\) for all vertices \(v \in V(A)\). Otherwise, that is, if \(d_{\Gamma_v}(A) > \Delta\) for some vertex \(v \in V(A)\), it is called \emph{bad}.

By Markov's inequality, for a fixed $v \in V(A)$, we know that $$\Pr(d_{\Gamma_v}(A) > \Delta) \le \frac{\mathbb{E}[d_{\Gamma_v}(A)]}{\Delta} \le \frac{1}{4t^*}.$$By the union bound over all $v \in V(A)$,
$$\Pr(A \text{ is bad}) \le \sum_{v \in V(A)} \Pr(d_{\Gamma_v}(A) > \Delta) \le t^* \cdot \frac{1}{4t^*} = \frac{1}{4}.$$
We define $\mathcal{F}'$ as the set of copies in $\mathcal{F}_{\text{total}}$ that are good.
The expected size of $\mathcal{F}'$ is
$$\mathbb{E}[|\mathcal{F}'|] \ge \left(1 - \frac{1}{4}\right) |\mathcal{F}_{\text{total}}| = \frac{3}{4} M (p^2+p+1).$$

We fix a realization of $\mathcal{F}'$ that consists of at least $\frac{3}{4} M (p^2+p+1)$ good copies.
Let $R$ be the graph formed by taking the union of the edge sets of all copies in $\mathcal{F}'$, and its vertex set is also that of the projective plane. By the construction, $R$ is a subgraph of $R_0$ with $V(R)=V(R_0)=V$ and $E(R)\subset E(R_0)$.
For every $v$, let $\Gamma'_v$ be the subgraph of $\Gamma_v$ induced by $\mathcal{F}' \cap \mathcal{F}_v$. By definition, the maximum degree $\Delta(\Gamma'_v) \le \Delta$.

Let $\chi: E(R) \to \{\text{red, blue}\}$ be an arbitrary edge coloring. Assume there is no red induced $G$.
Since $G^* \xrightarrow{\mathrm{ind}} (G, K_3)$, it follows that every copy $A \in \mathcal{F}'$ contains a blue triangle.
Let $\mathcal{T}$ be the collection of these blue triangles. The total number of such triangles is $|\mathcal{T}| = |\mathcal{F}'|$.
For any vertex $v \in V$, let $\text{deg}_{\mathcal{T}}(v)$ denote the number of blue triangles containing $v$.

Consider the set of incidence pairs defined by $I = \{(v, T)\mid v \in V(T),(v, T)\in V \times \mathcal{T}\}$. We compute the size of $I$ in two ways.
First, summing over the vertices, for each fixed $v \in V$, the number of pairs containing $v$ is exactly the number of triangles containing $v$, which is $\text{deg}_{\mathcal{T}}(v)$. Thus,
\[
|I| = \sum_{v \in V} |\{T\mid v \in V(T)\},T \in \mathcal{T}| = \sum_{v \in V} \text{deg}_{\mathcal{T}}(v).
\]
Second, summing over the triangles, for each fixed $T \in \mathcal{T}$, the number of pairs containing $T$ is the number of vertices in $T$, which is $|V(T)| = 3$. Thus,
\[
|I| = \sum_{T \in \mathcal{T}} |\{v\mid v \in V(T),v \in V\}| = \sum_{T \in \mathcal{T}} |V(T)| = 3 |\mathcal{F}'|.
\]
Equating the two expressions for $|I|$, we obtain that
\[
\sum_{v \in V} \text{deg}_{\mathcal{T}}(v) = 3 |\mathcal{F}'|.
\]
which implies
$$\sum_{v \in V} \text{deg}_{\mathcal{T}}(v) =3 |\mathcal{F}'| \ge 3 \cdot \frac{3}{4} M (p^2+p+1) = \frac{9}{4} M (p^2+p+1).$$

By averaging, there exists a vertex $v_0$ 
contained in at least 
$$\text{deg}_{\mathcal{T}}(v_0) \ge \frac{9}{4} M=\frac{9}{4}C_2n$$
blue triangles.
Let $k=\text{deg}_{\mathcal{T}}(v_0)$ and $T_1, \dots, T_k$ be the blue triangles containing $v_0$.
Since $T_1, T_2, \dots ,T_k$ are contained in pairwise distinct copies of $G^*$, by the property of the projective plane, $T_1, T_2, \dots , T_k$ lie on pairwise distinct lines in $\mathcal{L}_v$.
In particular, $V(T_i)\cap V(T_j)=\{v_0\}$ for any distinct $i$ and $j$. 

For each triangle $T_i \in \mathcal{T}_{v_0}$, let $A_i \in \mathcal{F}'$ be the unique copy of $G^*$ containing $T_i$.
Let $\mathcal{T}_{v_0} = \{T_1, \dots, T_k\}$ and $S = \{A_1, \dots, A_k\} \subseteq \mathcal{F}'$.

Consider $\Gamma'_{v_0}[S]$, which is the subgraph of
the local conflict graph $\Gamma'_{v_0}$ induced by $S$. 
Since the maximum degree of $\Gamma'_{v_0}[S]$ is at most $\Delta$, by Lemma \ref{Caro–Wei}, we conclude that the independence number satisfies
$$\alpha(\Gamma'_{v_0}[S]) \ge \frac{k}{\Delta + 1} \ge \frac{9 C_2 n}{4(\Delta + 1)}.$$
Using the condition $\frac{9 C_2 }{4(\Delta + 1)} \ge 1$, we have
$$\alpha(\Gamma'_{v_0}[S]) \ge n.$$
Take an independent set $\{A_{i_1}, \dots, A_{i_n}\}$ of $\Gamma'_{v_0}[S]$.
Since these copies are independent in $\Gamma'_{v_0}$, there are no edges in $R$ between $A_{i_u} \setminus \{v_0\}$ and $A_{i_w} \setminus \{v_0\}$ for any $u \neq w$.
Consequently, the corresponding blue triangles $T_{i_1}, \dots, T_{i_n}$ contained in these copies form a blue induced $F_n$.

We have shown that for sufficiently large $n$, there exists a graph $R$ on $\Theta(n^2)$ vertices such that any red-blue coloring of $R$ contains either a red induced $G$ or a blue induced $F_n$. Hence, $r_{\mathrm{ind}}(G,F_n)\leq Cn^2$.
\end{proof}

\section{Fans versus stars}




We denote the size (number of edges) of the maximum matching in $G$ as $\alpha^{\prime}(G)$; for a maximum matching $M$, its vertex set (all vertices incident to edges in $M$ ) is written as $V(M)$, and $|V(M)|=2|M|$ (each edge contributes 2 vertices).

\setcounter{claim}{0}
\setcounter{case}{0}

\begin{lemma}\label{th-K_{1,l}}
Let $\ell$ and $n$ be two positive integers with $\ell\leq n$. Then
$$
r_{\mathrm{ind}}(K_{1,\ell},F_{n})\geq2n+2\ell-1.
$$
\end{lemma}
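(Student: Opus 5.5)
The plan is to exhibit, for $N=2n+2\ell-2$ and an \emph{arbitrary} graph $F$ on $N$ vertices, a red/blue colouring of $E(F)$ with no red induced $K_{1,\ell}$ and no blue induced $F_n$. This shows $F\not\xrightarrow{ind}(K_{1,\ell},F_n)$ for every $F$ with $|V(F)|\le 2n+2\ell-2$; graphs on fewer vertices are handled by padding with isolated vertices. I will use two easy sufficient conditions. First, if every vertex has red degree at most $\ell-1$, there is no red $K_{1,\ell}$ at all, induced or not. Second, a blue induced $F_n$ needs a centre $v$ with $2n$ blue neighbours that span $n$ blue edges forming an induced matching in $F$, and every blue triangle must avoid all red edges. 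So the target is a red subgraph of small maximum degree that destroys enough blue triangles.

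The concrete scheme uses a maximum matching $M$ of $F$, which is why the notation $\alpha'(F)$ and $V(M)$ was just introduced. Any induced copy of $F_n$ contains a matching of size $n+1$ not just of size $n$: the centre can be matched to one leaf of a matching edge, so the other leaf is exposed, but then we still have the remaining $n-1$ edges. More usefully, each of its $n$ triangles must meet $V(M)$ in at least two vertices, since $V(F)\setminus V(M)$ is independent. I will split according to $\alpha'(F)$.

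\textbf{Small matching number.} If $\alpha'(F)\le n+\ell-2$, I try to colour red a spanning structure of degree $\le \ell-1$ around $V(M)$. I partition $V(M)$ into groups of at most $\ell$ vertices, colour the edges of $F$ inside each group red, and colour everything else blue. Red degrees are then at most $\ell-1$. I then check that a blue fan would need $2n$ leaves whose matching edges run between different groups, and count against $|V(M)|\le 2n+2\ell-4$.

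\textbf{Large matching number.} If $\alpha'(F)\ge n+\ell-1$, then $M$ is (almost) perfect. Here I colour $M$ itself red, which gives red degree $1\le \ell-1$, together with further red edges chosen inside small groups. The aim is to force every vertex's blue neighbourhood to lose enough vertices that fewer than $2n$ usable leaves remain. A leaf whose $M$-partner is also a blue neighbour of $v$ cannot be used in a different triangle without creating an extra edge of $F$, which would violate inducedness.

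The main obstacle is exactly this counting step: the naive bound (blue degree $\ge N-1-(\ell-1)=2n+\ell-2\ge 2n$) is too weak. One must genuinely use inducedness of the fan, namely that the $n$ matching edges span no further edges of $F$, against the red groups. I would first try groups of size exactly $\ell$ built from pairs of $M$-edges. I would show that each group meets the closed neighbourhood of any potential centre in a way that costs at least two leaves, so that at most $N-1-2(\ell-1)=2n-1<2n$ leaves survive. If this fails, the fallback is to make the blue graph bipartite between two carefully chosen halves, which removes all blue triangles. One then argues, using $\ell\le n$, that the red parts can be chosen free of induced $K_{1,\ell}$.
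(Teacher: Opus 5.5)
There is a genuine gap. The proposal never proves that the colouring it constructs has no blue induced $F_n$. You flag this counting step yourself as the main obstacle. What follows is a list of attempts rather than an argument: groups whose composition is never specified, an unsupported claim that every group ``costs at least two leaves'', and a fallback.

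Some surrounding statements are also off:
\begin{itemize}
\item An induced $F_n$ has $2n+1$ vertices, so it has no matching of size $n+1$.
\item Your ``large matching'' case is actually the trivial one. $\alpha'(F)\ge n+\ell-1=N/2$ means $M$ is perfect, and colouring only $M$ red already works (for $\ell\ge 2$). A blue induced fan contains at most one end of each red $M$-edge, so $2n+1\le n+\ell-1$, contradicting $\ell\le n$.
\item The fallback cannot work in general. It needs a partition $V(F)=P\cup Q$ with neither $F[P]$ nor $F[Q]$ containing an induced $K_{1,\ell}$. For triangle-free $F$ this forces $\Delta(F[P]),\Delta(F[Q])\le \ell-1$, hence $\chi(F)\le 2\ell$. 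So a triangle-free graph of larger chromatic number, padded with isolated vertices, has no such partition.
\end{itemize}

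What you are missing is where the difficulty actually lies. Once $M$ is red, any blue induced fan $U$ satisfies $|U\cap V(M)|\le k=|M|$.
\begin{itemize}
\item A centre in $I=V(F)\setminus V(M)$ is impossible: all $2n$ leaves would lie in $V(M)$, but $k\le n+\ell-1<2n$.
\item If $k\le n$ you are also done, because each triangle has at most one vertex in the independent set $I$.
\end{itemize}
The real case is $n+1\le k\le n+\ell-2$ with a centre $x\in V(M)$ that has more than $2n-k$ neighbours in $I$. The paper handles it in three steps:
\begin{enumerate}
\item It labels each $M$-edge $x_iy_i$ so that $e(y_i,I)\le 1$, using the absence of augmenting paths. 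This makes centres in $Y$ harmless.
\item For each $x\in X$ it colours red $f(x)=\max\{e(x,I)-(2n-k),0\}$ edges from $x$ into $I$. Then any fan centred at $x$ has at most $k+(2n-k)=2n$ vertices.
\item It shows $f(x)\le \ell-2$ by counting vertices. It then uses max-flow/min-cut to choose these red $X$--$I$ edges so that every vertex of $I$ receives at most $\ell-1$ of them.
\end{enumerate}
Your groups lie entirely inside $V(M)$, so every edge from such an $x$ to $I$ stays blue. You give no argument that red edges inside $V(M)$ can destroy all these fans while keeping red degree at most $\ell-1$ at both ends and working for all potential centres at once. That is the missing step.
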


\begin{proof}
Let $G$ be an arbitrary graph of order $2n+2\ell-2$, and we shall show that $G$ has a red-blue coloring without a red induced $K_{1,\ell}$ and a blue induced $F_n$.
Let $\alpha'(G)=k$ and $M=\{x_iy_i\mid 1\leq i\leq k\}$ be a maximum matching of $G$.
Set $I=V(G)\setminus V(M)$, $X=\{x_i\mid 1\leq i\leq k\}$, and $Y=\{y_i\mid 1\leq i\leq k\}$.
Note that $k\leq n+\ell-1<2n$.

By the maximality of $M$, $I$ is independent.
Furthermore, if $x_i$ has two neighbors $u,v$ $I$ and $y_i$ has a neighbor $w$ in $I$, then either $ux_iy_iw$ or $vx_iy_iw$ is an augmenting path with respect to $M$, a contradiction by the maximality of $M$.
Thus, without loss of generality, we may assume that $e_G(y,I)\leq 1$ for any $y\in Y$.
We set $f(x)=\max\{e_G(x,I)-2n+k, 0\}$.
If $f(x)\geq \ell-1$, then we have 
$$
|V(G)|\geq e_G(x,I)+2k\geq f(x)+(2n-k)+2k\geq 2n+k+\ell-1\geq 3n+\ell-1>2n+2\ell-2,
$$
a contradiction.
Thus we have $f(x)\leq\ell-2$ for each $x\in X$.
    
\begin{claim}\label{claim:flow}
    There exists a subgraph $F$ of $G$ such that $V(F)=X\cup I$, $d_F(x)=f(x)$ for any $x\in X$, and $d_F(v)\leq \ell-1$ for any $v\in I$.
\end{claim}

\begin{proof}
    We use the Max-Flow Min-Cut theorem.
    Let $H$ be an auxiliary graph defined by
    \[V(H)=X\cup Y\cup \{s,t\}, \quad E(H)=E_G(X,I)\cup \{sx\mid x\in X\}\cup \{vt\mid v\in I\}\]
    and let $c$ be a capacity function defined by $c(xv)=1$ for each $xv\in E_G(X,Y)$, $c(sx)=f(x)$ for each $x\in X$, and $c(vt)=\ell-1$ for each $v\in I$.
    Then the existence of an $s-t$ flow of amount $\sum_{x\in X}f(x)$ implies the existence of a desired subgraph $F$ of $G$.

    It suffices to show that for any $S\subseteq X$ and $T\subseteq I$, the cut condition
    \begin{align*}
        \sum_{x\in X}f(x)\leq \sum_{x\in X\setminus S}f(x)+e_H(S,I\setminus T)+|T|(\ell-1),
    \end{align*}
    equivalently
    \begin{align}
        \sum_{x\in S}f(x)\leq e_H(S,I\setminus T)+|T|(\ell-1)\label{eq1}
    \end{align}
    is satisfied.
    Since 
    \begin{align*}
        \sum_{x\in S}f(x)\leq \sum_{x\in S}(e_G(x,I)-2n+k)=e_G(S,I)-|S|(2n-k)=e_G(S,I\setminus T)+e_G(S,T)-|S|(2n-k)
    \end{align*}
    and $e_G(S,T)\leq |S||T|$,
    we substitute $\sum_{x\in S}f(x)=e_G(S,I\setminus T)+|S||T|-|S|(2n-k)$
    to have
    \begin{align}
        |S||T|-|S|(2n-k)\leq |T|(\ell-1),\label{eq2}
    \end{align}
    equivalently
    \begin{align}
        \bigl(|S|-(\ell-1)\bigr)\bigl(|T|-(2n-k)\bigr)\leq (\ell-1)(2n-k),\label{eq3}
    \end{align}
    which implies (\ref{eq1}).
    As the right side of (\ref{eq3}) is non-negative and the inequality holds when $|S|\leq \ell-1$ and $|T|\leq 2n-k$, it suffices to consider the case $|S|\geq \ell-1$ and $|T|\geq 2n-k$.
    Then the left side is monotonically increasing with respect to both $|S|$ and $|T|$, and thus the tightest constraint occurs when both are maximized, i.e., $|S|=k$ and $|T|=|I|$.
    Substituting these and $2n=|I|+2k-2\ell+2$ to (\ref{eq2}), we have
    \begin{align*}
        k|I|-k(|I|+k-2\ell+2)\leq |I|(\ell-1),
    \end{align*}
    and hence 
    \begin{align*}
        k(2\ell-k-2)\leq |I|(\ell-1).
    \end{align*}
    As $|I|=2n+2\ell-2-2k\geq 4\ell-2-2k$, we can substitute $|I|=4\ell-2-2k$ to have
    \begin{align*}
        k(2\ell-k-2)\leq (4\ell-2-2k)(\ell-1).
    \end{align*}
    Rearrangement of the terms yields a quadratic in terms of $k$;
    \begin{align}
        k^2-4(\ell-1)k+4\ell^2-6\ell+2\geq 0.\label{eq4}
    \end{align}
    The discriminant of this quadratic is $\Delta=-8(\ell-1)\geq 0$ for $\ell\geq 1$.
    Thus, (\ref{eq4}) holds any $\ell\geq 1$ and any $k$, and hence (\ref{eq1}) holds for any $S\subseteq X$ and $T\subseteq I$.
    This completes the proof of Claim~\ref{claim:flow}.
\end{proof}
We color $M\cup E(F)$ in red and the other edges in blue.
By the definition of $F$, every vertex $y\in Y$ is incident with one red edge, every vertex $v\in I$ incident with at most $\ell-1$ red edges, and every vertex $x\in X$ is incident with $f(x)+1\leq \ell-1$ red edges.
Thus no red induced $K_{1,\ell}$ exists.
    
Suppose that there exists a blue induced $F_n$ with vertices $U$ and the center $u$.
Since at most one of $\{x_i,y_i\}$ can be in $U$ for each $x_iy_i\in M$, we have $k\geq |U\cap (X\cup Y)|$.
If $u\in I$, then since $I$ is an independent set, it follows that $k\geq |U\setminus \{u\}|=2n$, and hence $|V(G)|\geq 2k+|U\cap I|=2k+1\geq 4n+1>2n+2\ell-2$, a contradiction.
If $u\in Y$, then since $u$ has at most one neighbor in $I$, it follows that $|U\cap (X\cup Y)|\geq (2n+1)-1=2n$, and hence $|V(G)|\geq 2k+1\geq 4n+1$, again a contradiction.
Thus, we infer that $u\in X$.
Since $u$ has $f(u)$ neighbors in $I$, which are joined by red edges to $u$, it follows that
$$
|U\cap I|\leq e_G(u,I)-f(u)\leq e_G(u,I)-(e_G(u,I)-2n+k)=2n-k.
$$
This implies that $|U|=|U\cap (X\cup Y)|+|U\cap I|\leq k+(2n-k)=2n$, a contradiction.

Combining these, we conclude that $G$ has no blue induced $F_n$, and hence this is a desired coloring of $G$.
Therefore,
\[
r_{\mathrm{ind}}(K_{1,\ell}, F_n) \ge 2n + 2\ell - 1.
\]
\end{proof}

\begin{lemma}\label{Star-lower}
For any positive integers $\ell$ and $n$,
    \[r_{\mathrm{ind}}(K_{1,\ell},F_n)\leq r_{\mathrm{ind}}(K_{1,\ell},(\ell+n-1)K_2)+1\leq (\ell+n-1)(\ell+1)+1. \]
\end{lemma}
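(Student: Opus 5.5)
The plan is to prove the two inequalities separately: the first by a ``cone'' construction, the second by an explicit small host graph.

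\emph{First inequality.} Let $m=\ell+n-1$ and let $H$ be a graph on $r_{\mathrm{ind}}(K_{1,\ell},mK_2)$ vertices with $H\xrightarrow{ind}(K_{1,\ell},mK_2)$. Form $H'$ by adding a new vertex $w$ joined to every vertex of $H$. Then $|V(H')|=|V(H)|+1$. Fix any red-blue coloring of $E(H')$ with no red induced $K_{1,\ell}$.

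First look at the edges at $w$. The red neighbours of $w$ must contain no independent set of size $\ell$ in $H$; otherwise $w$ together with that set is a red induced $K_{1,\ell}$, because $w$ is adjacent to everything. Next, the restriction to $H$ has no red induced $K_{1,\ell}$, so $H$ contains a blue induced matching $e_1,\dots,e_m$. The vertex sets of distinct $e_i$ have no edges between them.

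Call $e_i$ \emph{bad} if at least one endpoint of $e_i$ is joined to $w$ in red. Choosing one such endpoint from each bad edge gives an independent set of red neighbours of $w$, since the matching is induced. Hence there are at most $\ell-1$ bad edges, and at least $m-(\ell-1)=n$ good edges. Every good edge spans a blue triangle with $w$. The good edges together with $w$ induce $F_n$: the only edges among these vertices are the matching edges and the edges to $w$, and all of these are blue. This proves $r_{\mathrm{ind}}(K_{1,\ell},F_n)\le r_{\mathrm{ind}}(K_{1,\ell},mK_2)+1$.

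\emph{Second inequality.} It suffices to show $r_{\mathrm{ind}}(K_{1,\ell},mK_2)\le m(\ell+1)$. Take $H=mK_{\ell+1}$, the disjoint union of $m$ copies of $K_{\ell+1}$. Fix a coloring with no red induced $K_{1,\ell}$ and look at one clique $K_{\ell+1}$. Any induced $K_{1,\ell}$ inside a clique would need its leaves to be pairwise nonadjacent, which is impossible for $\ell\ge2$. So the easy argument has to be different.

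\emph{Main obstacle.} The real point is that each clique must contain a blue edge. If all edges of some $K_{\ell+1}$ were red, there would be no red induced star $K_{1,\ell}$ when $\ell\ge2$, so this choice of host graph fails. I would therefore change the host: use $m$ disjoint copies of a graph $B$ on $\ell+1$ vertices that forces a blue edge whenever it has no red induced $K_{1,\ell}$. The natural choice is $B=K_{1,\ell}$ itself. If every edge of $B$ is red, then $B$ is a red induced $K_{1,\ell}$. So in the coloring, each copy of $B$ contains a blue edge. Taking one such blue edge from each component gives an induced blue $mK_2$, since there are no edges between components. This yields $r_{\mathrm{ind}}(K_{1,\ell},mK_2)\le m(\ell+1)$. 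Combined with the first inequality, $r_{\mathrm{ind}}(K_{1,\ell},F_n)\le(\ell+n-1)(\ell+1)+1$.

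The step needing most care is the cone argument: the chosen red endpoints at $w$ must really be pairwise nonadjacent in $H'$. That holds because the blue matching is induced in $H$, and the edges at $w$ do not join two vertices of $H$.
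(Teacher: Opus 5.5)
Your proof is correct and follows the paper's argument. For the first inequality you cone a new vertex $w$ over a host graph for $(K_{1,\ell},(\ell+n-1)K_2)$; your choice of one red endpoint per bad matching edge justifies the bound of $\ell-1$ bad edges, which the paper only asserts. For the second you use the disjoint union $(\ell+n-1)K_{1,\ell}$ as the paper does, so in a final write-up drop the abandoned $mK_{\ell+1}$ attempt and go straight to $mK_{1,\ell}$.
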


\begin{proof}
We first show the first inequality.
    We fix positive integers $\ell$ and $n$.
    Let $H$ be a graph of order $r_{\mathrm{ind}}(K_{1,\ell}, (\ell+n-1)K_2)$ such that $H\overset{ind}{\longrightarrow} (K_{1,\ell}, (\ell+n-1)K_2)$.
    Let $G$ be a graph obtained from $H$ by adding a new vertex $w$ which is adjacent to all the vertices of $H$.
    Now we color the edges of $G$ with two colors red and blue, and assume that there is no red induced $K_{1,\ell}$.
    Since $H\overset{ind}{\longrightarrow} (K_{1,\ell}, (\ell+n-1)K_2)$, $G-w$ contains a blue induced matching $\{u_iv_i\mid 1\leq i\leq \ell+n-1\}$ of size $\ell+n-1$.
    As $G$ has no red induced $K_{1,\ell}$, at most $\ell-1$ edges in $\{u_iv_i\mid 1\leq i\leq \ell+n-1\}$ can have end vertices that are adjacent to $w$ with red edges.
    As $\ell+n-1-(\ell-1)=n$, without loss of generality, we may assume that both $u_iw$ and $v_iw$ are blue edges for each $i\in \{1,\dots ,n\}$, and hence $\{w\}\cup \bigcup_{i=1}^{n}\{u_i,v_i\}$ induces a blue $F_n$.

    For the second inequality, we consider $(\ell+n-1)K_{1,\ell}$, the union of $\ell+n-1$ disjoint copies of $K_{1,\ell}$.
    It is easy to see that $(\ell+n-1)K_{1,\ell}$ has order $(\ell+n-1)(\ell+1)$ and that $(\ell+n-1)K_{1,\ell}\overset{ind}{\longrightarrow}(K_{1,\ell},(\ell+n-1)K_2)$.
\end{proof}

\setcounter{case}{0}
\setcounter{claim}{0}

\begin{lemma}\label{le-K_{1,2}}
For each positive integer $n\geq2$, we have 
\[
r_{\mathrm{ind}}(K_{1,2}, F_n)=3n + 4.
\]
\end{lemma}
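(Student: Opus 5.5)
The upper bound comes for free from Lemma~\ref{Star-lower} with $\ell=2$: $(\ell+n-1)(\ell+1)+1=3(n+1)+1=3n+4$. Concretely, the graph is $K_1\vee (n+1)K_{1,2}$, i.e.\ a vertex $w$ joined to $n+1$ disjoint paths $P_3$. So the real work is the lower bound. I must show that every graph $G$ on $3n+3$ vertices admits a red/blue colouring with no red induced $P_3=K_{1,2}$ and no blue induced $F_n$.

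A colouring has no red induced $K_{1,2}$ exactly when every red component is a clique of $G$ whose vertex set induces a clique. Equivalently, the red edges form a disjoint union of cliques of $G$. The plan mirrors Lemma~\ref{th-K_{1,l}}. I will pick a suitable family of vertex-disjoint cliques of $G$ (edges and triangles, possibly larger), colour their edges red, and colour everything else blue. For a blue induced $F_n$ with centre $u$ and leaf set $U$ ($|U|=2n$), I need the following:
\begin{itemize}
\item[(i)] $U$ meets each red clique in at most one vertex, since two leaves joined in $G$ must be a matching pair, and that edge must be blue;
\item[(ii)] $u$ has no leaves in its own red clique.
\end{itemize}
Hence $2n$ leaves come from $2n$ distinct red classes, singletons included, other than $u$'s class, and all of them lie in $N(u)$.

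The counting step is as follows. Choose a partition $\mathcal P$ of $V(G)$ into cliques maximizing the number of classes of size $\ge 2$ (or lexicographically maximizing sizes). Let there be $a$ triangles or larger cliques, $b$ edges and $c$ singletons, with $3a+2b+c\le 3n+3$ up to larger classes. The singletons form an independent set $I$. A blue $F_n$ needs $2n$ distinct classes meeting $N(u)$. The total number of classes is $a+b+c$, and this is $\ge 2n+1$ only if $c$ is large, roughly $c\ge 2n+1-a-b$. In that case $|V|\ge 3a+2b+c$ forces many singletons. Then $u$ must see many singletons, and two adjacent singletons cannot both be leaves paired in the matching, because $I$ is independent. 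So the leaves in $I$ can only be paired with leaves outside $I$. This gives $|U\cap I|\le |U\setminus I|$, hence at least $n$ leaves come from non-singleton classes. So $a+b\ge n+1$ (counting $u$'s class), and $|V|\ge 2(a+b)+|U\cap I|$ roughly reaches $2(n+1)+n+\ldots$.

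The main obstacle is making this inequality tight at $3n+3$. Triangles cost $3$ vertices but supply only one class, so a naive count gives about $3n+2$ or $3n+3$, and the boundary cases must be handled. The plan is to use maximality of the clique partition, analogous to the augmenting-path argument in Lemma~\ref{th-K_{1,l}}. If a singleton $s\in U\cap I$ is adjacent to $u$ and $u$'s class is an edge $\{u,u'\}$ with $u'$ not adjacent to $s$, we can re-choose the partition. Likewise, a singleton adjacent to both ends of an edge class could upgrade that edge to a triangle. I would also allow recolouring a few edges from $u$ to red by a flow/matching argument like Claim~\ref{claim:flow}, although for $\ell=2$ red degree $1$ forces that red edge to be a matching. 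The key extra blue-avoiding rule is to choose the partition so that any vertex $u$ with $2n$ candidate classes loses at least one via its own red edge. I expect this extremal case analysis to be the hardest part: $u$ in a singleton, an edge, or a triangle class, with $|U\cap I|=n$ exactly. The aim is to show that equality in every count forces $|V|\ge 3n+4$.
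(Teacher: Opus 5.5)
\paragraph{Gap: the lower bound.} Your upper bound via Lemma~\ref{Star-lower} is correct, but the lower bound is not proved. The proposal stops exactly where the difficulty lies, and the expectation that drives it is false. With a clique partition that maximizes the number of non-singleton classes (or maximizes class sizes lexicographically), your count only gives $|V(G)|\ge 3n+2$. No analysis of the equality case for that one colouring can push this to $3n+4$, because at $3n+3$ vertices such partitions can genuinely produce a blue induced $F_n$.

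Here is an example. Let $G$ be a vertex $c$ joined to the disjoint union of the tree with edges $p_1q_1,q_1r_1,q_1u,uw$ and the paths $p_jq_jr_j$ for $2\le j\le n$. Then $|V(G)|=3n+3$ and $\alpha'(G)=n+2$. So the partition $\{c,r_1\},\{p_1,q_1\},\{u,w\},\{p_j,q_j\},\{r_j\}$ ($2\le j\le n$) has the maximum possible number of non-singleton classes. Yet $c$ together with the pairs $q_1u$ and $q_jr_j$ ($2\le j\le n$) induces a blue $F_n$. The lexicographic rule fails on the same graph: take the triangle $\{c,u,w\}$ and edges $\{p_1,q_1\}$ and $\{p_j,q_j\}$, and the blue pairs are $q_1r_1$ and $q_jr_j$. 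A good colouring does exist (colour only $cq_2$ red). But it must be chosen according to the structure of $G$, and that is the entire content of the lemma.

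Note also that ``no red induced $K_{1,2}$'' means that the red neighbourhood of every vertex is a clique of $G$. Red components need not be cliques, and for $\ell=2$ red degrees may exceed $1$.

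\paragraph{What is missing.} You need a choice of partition that makes the count reach $3n+3$ and makes the equality case rigid. The paper takes a clique partition with the minimum number $r=\chi(\overline{G})$ of classes and, among those, the maximum number of singletons. Besides your observations (i), (ii) and the independence of singleton classes, a swap argument then gives property (c): a triangle of the fan that meets a singleton class also meets a class of size at least $3$. Your rule does not give (c); in the example above, $cq_jr_j$ meets classes of sizes $2,2,1$. This is exactly why your count stalls at $3n+2$. With (c) one gets $|V(G)|\ge \min\{5n+1,\,4n+2,\,3n+3\}=3n+3$ for $n\ge 2$.

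In the equality case, the paper then extracts further structure using auxiliary recolourings: $a_1$ is universal, $\alpha'(G-a_1)=n+1$, and $G-\{a_1,a_{2n+2},a_{2n+3}\}$ is $n$ disjoint copies of $P_3$. Only then does it recolour.

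Even this last step is delicate. In the paper's Case~2, suppose $a_{2n+2}b_3,\,a_{2n+3}a_3\in E(G)$ and there are no other extra edges. Then under $\chi_5$ with $x=b_3$, the vertex $a_1$ with the pairs $a_3a_{2n+3}$ and $a_{2s+1}b_{2s+1}$ ($2\le s\le n$) still induces a blue $F_n$. The symmetric choice $x=a_3$ fails in the same way via the pair $b_3a_{2n+2}$. So the uniqueness claim used there does not hold, and a correct proof must choose that final recolouring differently.
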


\begin{proof}
By Lemma \ref{Star-lower} with $\ell=2$, we know that $r_{\text {ind}}\left(K_{1,2}, F_n\right) \leq 3 n+4$.
We shall show that any graph $G$ with at most $3 n+3$ vertices has an edge coloring in two colors red and blue that contains neither a red induced $K_{1,2}$ nor a blue induced $F_n$. Let $r$ be the chromatic number of the complement $\overline{G}$ of $G$. Let $\phi$ be a proper $r$-coloring of $\overline{G}$ such that the number of color classes of size one is as large as possible. Let $V_1, \ldots, V_r$ be the color classes of $\phi$. Then each $V_i$ is a clique of $G$ for $i \in\{1, \ldots, r\}$.

We define a 2-edge coloring $\chi_1$ of $G$ so that the edges in $G\left[V_i\right]$ are red for each $i \in\{1, \ldots, r\}$ and the edges between $V_i$ and $V_j$ are blue for any $i$ and $j$ with $1 \leq i<j \leq r$. Since each red edge lies in a clique of $G$, it follows that $G$ has no induced red copy of $K_{1,2}$. If $G$ has no induced blue copy of $F_n$, then $\chi_1$ is a desired coloring. Thus, we assume that $G$ has an induced blue copy of $F_n$ induced by $A:=\{a_1,\dots ,a_{2n+1}\}$, where $a_1a_{2s}a_{2s+1}$ is a blue triangle for each $s\in \{1,\dots ,n\}$. As two vertices in the same class $V_i$ are joined by a red edge,
\begin{quote}
    (a) vertices of $A$ are in the pairwise distinct classes in $V_1, \ldots, V_r$,    
\end{quote}
which forces $r \geq\left|V\left(F_n\right)\right|=2 n+1$. Without loss of generality, we may assume that $a_i \in V_i$ for each $i \in\{1, \ldots, 2 n+1\}$. Let $A_1:=\left\{a_i\mid 1 \leq i \leq 2 n+1,|V_i|=1\right\}$, $A_2:=\left\{a_i\mid 1 \leq i \leq 2 n+1,|V_i|=2\right\}$, and $A_3:=\left\{a_i\mid 1 \leq i \leq 2 n+1, |V_i| \geq 3\right\}$. Since $V_1 \cup \cdots \cup V_r$ is defined from an $r$-coloring $\phi$ of $\overline{G}$, $E_{\overline{G}}\left(V_i, V_j\right) \neq \emptyset$ for any distinct $V_i$ and $V_j$, and hence
\begin{quote}
    (b) $A_1$ is an independent set of $G$.  
\end{quote}

Furthermore, for a triangle $a_1 a_{2 s} a_{2 s+1}$ of the induced $F_n$, if exactly one of $\left\{a_1, a_{2 s}, a_{2 s+1}\right\}$, say $a_1$, belongs to $A_1$ and other two vertices belong to $A_2$, then we let $V_1^{\prime}:=\left\{a_1, a_{2 s}, a_{2 s+1}\right\}, V_{2 s}^{\prime}:=V_{2 s} \setminus\left\{a_{2 s}\right\}$, and $V_{2 s+1}^{\prime}:=V_{2 s+1} \setminus\left\{a_{2 s+1}\right\}$. Since each of $V_1^{\prime}, V_{2 s}^{\prime}, V_{2 s+1}^{\prime}$ is a clique of $G$ and $\left|V_{2 s}^{\prime}\right|=\left|V_{2 s+1}^{\prime}\right|=1,\left(\left\{V_2, V_3, \ldots, V_{2 n+1}\right\} \setminus\left\{V_{2 s}, V_{2 s+1}\right\}\right) \cup\left\{V_1^{\prime}, V_{2 s}^{\prime}, V_{2 s+1}^{\prime}\right\}$ induces a proper $r$-coloring $\phi^{\prime}$ of $\overline{G}$ with a strictly larger number of classes of size one than $\phi$, a contradiction by the choice of $\phi$. 
Thus, 
\begin{quote}
    (c) For each $s\in \{1,\ldots ,n\}$, if one of $\{a_1, a_{2s}, a_{2s+1}\}$ belongs to $A_1$, then at least one of the others belongs to $A_3$.
\end{quote}
If $a_1\in A_1$, then (b) and (c) imply that $|V_{2s}|+|V_{2s+1}|\geq 2+3=5$ for any $s\in \{1,\ldots ,n\}$, and hence $|V(G)|=\sum_{i=1}^r|V_i|\geq \sum_{i=1}^{2n+1}|V_i|=|V_1|+\sum_{s=1}^n(|V_{2s}|+|V_{2s+1}|) \geq 1+5n>3n+3$, a contradiction.
If $a_1\in A_2$, then (b) and (c) imply that $|V_{2s}|+|V_{2s+1}|\geq 4$ for any $s\in \{1,\ldots ,n\}$, and hence $|V(G)|\geq \sum_{i=1}^{2n+1}|V_i| =|V_1|+\sum_{s=1}^n(|V_{2s}|+|V_{2s+1}|)\geq 2+4n>3n+3$, again a contradiction.
Thus, we conclude that $a_1\in A_3$.
Since $|V_{2s}|+|V_{2s+1}|\geq 3$ for any $s\in \{1,\dots ,n\}$ by (b) and (c), we have that 
\[
|V(G)|\geq \sum_{i=1}^{2 n+1}\left|V_i\right|=|V_1|+\sum_{s=1}^n(|V_{2s}|+|V_{2s+1}|) \geq 3+3n,
\]
and hence $|V(G)|=3 n+3$. 
In particular, since the above inequalities are indeed equalities, without loss of generality, we may assume that $\left|V_1\right|=3$, $\left|V_{2 s}\right|=1$, $\left|V_{2 s+1}\right|=2$ for every $s \in\{1, \ldots, n\}$, and $V(G)=\bigcup_{i=1}^{2 n+1} V_i$. 
Let $V_{2s}=\left\{a_{2s}\right\}$ and $V_{2 s+1}=\left\{a_{2 s+1}, b_{2 s+1}\right\}$ for each $s \in\{1, \ldots, n\}$, and let $V_1 = \{a_1, a_{2n+2}, a_{2n+3}\}$. Then we have $A_1=\{a_{2s}\mid 1\leq s\leq n\}$, $A_2=\{a_{2s+1}\mid 1\leq s\leq n\}$, and $A_3=\{a_1\}$.
Note that the fact $E_{\overline{G}}(V_{2s},V_{2s+1})\neq \emptyset$ implies that $a_{2s}b_{2s+1}\notin E(G)$ for each $s\in \{1,\dots ,n\}$. Set $B=\{b_{2s+1}\mid 1\leq s\leq n\}$.

\begin{claim}\label{claim1-exact}
$N_G\left(a_1\right)=V(G) \setminus\left\{a_1\right\}$.    
\end{claim}
\begin{proof}
By symmetry, it suffices to show that $b_3 a_1 \in E(G)$. Let $W_1=\left\{a_1, a_2, a_3\right\}$, $W_2=\left\{a_3\right\}$, $W_3=\left\{a_1, b_1\right\}$, and $W_i=V_i$ for each $i \in\{4, \ldots, 2 n+1\}$. We consider another 2-edge coloring $\chi_{2}$ of $G$ by coloring all the edges in $G\left[W_i\right]$ red and other edges blue. Since each $W_i$ is a clique of $G$, it follows that $G$ does not have an induced red $K_{1,2}$ with respect to $\chi_{2}$. If $G$ does not have an induced blue $F_n$, then $\chi_{2}$ is a desired coloring. Thus, we assume that there exists $w_i \in W_i$ for each $i \in\{1, \ldots, 2n+1\}$ such that $\left\{w_1, \ldots, w_{2 n+1}\right\}$ induces a blue $F_n$. Obviously, we have $w_i=a_i$ for $i \in\{2 s \mid 2 \leq s \leq n\}$ and $w_2=b_3$. By the similar argument to that for $\chi_1$, we know that the class of size 3 corresponds to the center of the $F_n$, and hence $w_1$ is the center. Since $W_1 \cap \bigcap_{s=2}^n N_G\left(w_{2 s}\right)=\left\{a_1, a_2, a_3\right\} \cap \bigcap_{s=2}^n N_G\left(a_{2 s}\right)=\left\{a_1\right\}$, it follows that $w_1=a_1$. Therefore, the fact that $w_1 \in N_G\left(w_2\right)$ implies $a_1 \in N_G\left(b_3\right)$.    
\end{proof}

\begin{claim}\label{claim2-exact}
$G-a_1$ does not contain $F_n$ as an induced subgraph.   
\end{claim}
\begin{proof}
Suppose that $\left\{w_1, \ldots, w_{2 n+1}\right\} \subseteq V(G) \setminus\left\{a_1\right\}$ induces a copy of $F_n$, where $w_1 w_{2 s} w_{2 s+1}$ is a triangle for each $s \in\{1, \ldots, n\}$. Then we consider a vertex partition $W_1 \cup \cdots \cup W_t$ such that $W_1=\left\{a_1, w_2, w_3\right\}, W_2=\left\{w_1, w_4, w_5\right\}$, $W_i=\left\{w_{2 i}, w_{2 i+1}\right\}$ for $i \in\{3, \ldots, n\}$, and $W_i$ consists of a single vertex for each $i \in\{n+1, \ldots, t\}$. Then we have $3 n+3=|V(G)|=\sum_{i=1}^t\left|W_i\right|=6+2(n-2)+(t-n)$, which implies $t=2 n+1=r$. Since each $W_i$ is a clique of $G$, this partition induces a proper $r$-coloring of $\overline{G}$ with $n+1$ color classes of size one, a contradiction by the choice of $\phi$.
\end{proof}

\begin{claim}\label{claim:max matching}
        The matching number of $G-a_1$ is at most $n+1$.
        In particular, $B$ is an independent set of $G$ and $E_G(A_1,B)=\emptyset$.
    \end{claim}

    \begin{proof}
        Assume to the contrary that $G-a_1$ has a matching $M=\{x_jy_j\mid 1\leq j\leq n+2\}$ of size $n+2$.
        We consider a vertex partition $W_1\cup \dots \cup W_t$ of $G$ such that $W_1=\{a_1,x_1,y_1\}$, $W_i=\{x_i,y_i\}$ for $i\in \{2,\dots ,n+2\}$, and $W_i$ consists of a single vertex for each $i\in \{n+3,\dots ,t\}$.
        Then we have $3n+3=|V(G)|=\sum_{i=1}^{t}|W_i|=3+2(n+1)+(t-n-2)$, which implies that $t=2n<r$.
        Since each $W_i$ is a clique of $G$, this partition induces a proper $t$-coloring of $\overline{G}$ with $t < r$, contradicting the definition of $r$. Thus, the matching number of $G-a_1$ is at most $n+1$.

        For the latter statement, we consider a matching $M^*=\{a_{2n+2}a_{2n+3}\} \cup \{a_{2s}a_{2s+1} \mid 1 \le s \le n\}$ of size $n+1$ in $G-a_1$.
        Since the maximum matching size is exactly $n+1$ and $V(B) \cap V(M^*) = \emptyset$, it follows that $B$ must be an independent set of $G$; otherwise, any edge within $B$ could be added to $M^*$ to form a matching of size $n+2$, a contradiction.
        
        Next, we show that $E_G(A_1, B) = \emptyset$. Suppose for a contradiction that there is an edge $a_{2s_1} b_{2s_2+1} \in E(G)$ for some $s_1, s_2 \in \{1, \dots, n\}$. If $s_1 = s_2$, this contradicts the earlier deduction that $a_{2s}b_{2s+1} \notin E(G)$, which follows from $E_{\overline{G}}(V_{2s}, V_{2s+1}) \neq \emptyset$. If $s_1 \neq s_2$, since the vertices $a_{2s_1+1}$ and $b_{2s_1+1}$ belong to the same clique $V_{2s_1+1}$, it follows that $a_{2s_1+1}b_{2s_1+1} \in E(G)$, and hence, we can construct a larger matching $(M^* \setminus \{a_{2s_1}a_{2s_1+1}\}) \cup \{a_{2s_1}b_{2s_2+1}, a_{2s_1+1}b_{2s_1+1}\}$. This new matching has size $(n+1) - 1 + 2 = n+2$, which again contradicts the maximum matching size. Therefore, $E_G(A_1, B) = \emptyset$.
    \end{proof}

    \begin{claim}\label{claim:a2 b}
        $E_G(A_2,B)=\{a_{2s+1}b_{2s+1}\mid 1\leq s\leq n\}$, and $A_2$ is an independent set of $G$.
    \end{claim}

    \begin{proof}
        We consider an edge coloring $\chi_3$ such that the edges $\{a_{2s}a_{2s+1}\mid 1\leq s\leq n+1\}\cup \{a_1a_{2n+2}, a_1a_{2n+3}\}$ are colored red, and others are colored blue.
        Since $\{a_{2s}a_{2s+1} \mid 1 \le s \le n\}$ is a matching disjoint from $\{a_{2n+2}, a_{2n+3}\}$, and $a_1 a_{2n+2} a_{2n+3}$ forms a triangle in $G$, $G$ contains no induced red $K_{1,2}$ under $\chi_3$. If there is no induced blue $F_n$, then $\chi_3$ is a desired coloring, and the theorem holds.
        Suppose that there is a blue induced $F_n$ under $\chi_3$.
        By Claim~\ref{claim2-exact}, we know that the 
        induced $F_n$ must contain the vertex $a_1$. Furthermore, the edges between $a_1$ and both $a_{2n+2}$ and $a_{2n+3}$ are red, so $a_{2n+2}$ and $a_{2n+3}$ cannot be part of this induced $F_n$. Also, we know the center of this blue induced $F_n$ must lie in $V_1$. Consequently, the center must be $a_1$. Thus, the $2n$ leaf vertices of this $F_n$ must be chosen from $A_1 \cup A_2 \cup B$, and they must induce a blue $nK_2$.
        
        Because the edge $a_{2s}a_{2s+1}$ is red for each $s \in \{1, \dots, n\}$, the induced blue $nK_2$ can contain at most one vertex from each pair $\{a_{2s}, a_{2s+1}\}$. This implies that the blue $nK_2$ contains at most $n$ vertices from $A_1 \cup A_2$. By Claim \ref{claim:max matching}, $B$ is an independent set and $E_G(A_1, B) = \emptyset$. Since every vertex in the induced blue $nK_2$ must have degree exactly 1, the $n$ vertices of $B$ must be matched with vertices in $A_2$. This forces the remaining $n$ vertices of the $F_n$ to be exactly the set $A_2$.

        Hence, $A_2 \cup B$ induces the blue $nK_2$. This implies that in the original graph $G$, the bipartite graph between $A_2$ and $B$ must be a perfect matching, and there are no other edges within $A_2$ or between $A_2$ and $B$. Since $a_{2s+1}b_{2s+1} \in E(G)$ for each $s$, these $n$ edges perfectly form the required matching. Therefore, $E_G(A_2, B) = \{a_{2s+1}b_{2s+1} \mid 1 \le s \le n\}$ and $A_2$ is an independent set.
\end{proof}

By Claims~\ref{claim:max matching} and \ref{claim:a2 b}, $E(G-\{a_1,a_{2n+2},a_{2n+3}\})=\bigcup_{a=1}^{n} \{a_{2s}a_{2s+1}, a_{2s+1}b_{2s+1}\}$.
In particular $a_{2s}$ and $b_{2s+1}$ are symmetric.
In the following proof, we divide the case according to the edge between $\{a_{2n+2}, a_{2n+3}\}$ and $A_1\cup A_2\cup B$.

    \medskip

\begin{case}
    $E_G(\{a_{2n+2},a_{2n+3}\},A_1\cup A_2\cup B)=\emptyset$.
\end{case}

    In this case, $E(G-a_1)=\{a_{2n+2}a_{2n+3}\}\cup \bigcup_{s=1}^n \{a_{2s}a_{2s+1}, a_{2s+1}b_{2s+1}\}$.
    Let $\chi_4$ be an edge coloring of $G$ such that $\{a_{2s}a_{2s+1}\mid 1\leq s\leq n+1\}\cup \{a_1a_2,a_1a_3\}$ are colored red and others are blue.
    Since $\{a_{2s}a_{2s+1}\mid 1\leq s\leq n+1\}$ is a matching and $a_1$ is adjacent to every other vertex, it follows that $G$ has no red induced $K_{1,2}$ under $\chi_4$.
    On the other hand, $G$ contains a unique blue induced matching $\{a_{2s+1}b_{2s+1}\mid 1\leq s\leq n\}$ of size $n$ but $a_1a_3$ is red, which implies that $G$ has no blue induced $F_n$.
    Thus $\chi_4$ is a desired coloring.

  \begin{case}
      $E_G(\{a_{2n+2},a_{2n+3}\},A_1\cup A_2\cup B)\neq \emptyset$.
  \end{case}

    By symmetry, we may assume that there is a vertex $x\in \{a_3,b_3\}\cap N_G(a_{2n+2})$.
    Let $\chi_5$ be an edge coloring of $G$ such that $\{a_{2s}a_{2s+1}\mid 1\leq s\leq n+1\}\cup \{a_1a_{2n+2}, a_1x\}$ are colored red and others are blue.
    Since $\{a_{2s}a_{2s+1}\mid 1\leq s\leq n+1\}$ is a matching and $a_1$ is adjacent to every other vertex, it follows that $G$ has no red induced $K_{1,2}$ in $\chi_5$.
    On the other hand, $G$ contains a unique blue induced matching $\{a_{2s+1}b_{2s+1}\mid 1\leq s\leq n\}$ of size $n$ but $a_1b_3$ is red, which implies that $G$ has no blue induced $F_n$.
    Thus $\chi_5$ is a desired coloring.
\end{proof}

\newtheorem*{mainthm2}{\rm\bf Theorem~\ref{star-fan-bound}}
\begin{mainthm2}[Restated]
Let $n$, $\ell$ be positive integers with $\ell\leq n$. Then
$$
2n+2\ell-1\leq r_{\mathrm{ind}}(K_{1,\ell},F_{n})\leq (\ell+n-1)(\ell+1)+1.
$$
Furthermore, if $l=2$, we have
$r_{\mathrm{ind}}(K_{1,2},F_{n})=3n+4$.
\end{mainthm2}
\begin{proof}
From Lemmas \ref{th-K_{1,l}}, \ref{Star-lower} and \ref{le-K_{1,2}}, we could obtain the conclusion immediately.
\end{proof}

\section{Conclusions}

In this paper, we explore the asymptotic behavior and exact values of induced Ramsey numbers for fan graphs. For a general fixed graph $G$, we establish a quadratic upper bound $r_{\mathrm{ind}}(G, F_n) \le O(n^2)$. In addition to these robust asymptotic bounds, we investigate the extremely sparse regime by evaluating $r_{\mathrm{ind}}(K_{1,\ell}, F_n)$, obtaining tight linear bounds for general $\ell$ and determining the exact value for $\ell=2$. We conclude with several open problems for future research.

\begin{enumerate}
    \item While we determined the exact value $r_{\mathrm{ind}}(K_{1,2}, F_n) = 3n+4$, there remains a gap between our upper and lower bounds for $r_{\mathrm{ind}}(K_{1,\ell}, F_n)$ when $\ell \ge 3$. Determining the exact exact formula, or at least the precise leading coefficient, for general $\ell$ is an interesting combinatorial challenge.
    
    \item We have shown that $r_{\mathrm{ind}}(G, F_n) = O(n^2)$ for any fixed $G$. An immediate question is whether this quadratic upper bound is asymptotically tight. Does there exist a fixed graph $G$ and a constant $c > 0$ such that $r_{\mathrm{ind}}(G, F_n) \ge c n^2$? Or can the upper bound be further improved to $O(n^{2-\epsilon})$?
    
   \item Our projective plane framework successfully handled the triangle fan $F_n$, which is a cograph. It would be of great interest to investigate whether this methodology can be generalized to establish low-degree polynomial bounds for fans of non-cographs. For instance, if $F_n^{(C_5)}$ denotes a fan consisting of $n$ pentagons sharing a common vertex, does $r_{\mathrm{ind}}(G, F_n^{(C_5)})$ still admit an $O(n^2)$ bound?
\end{enumerate}

\end{document}